\documentclass[16pt,oneside,reqno]{amsart}
\usepackage{amsfonts,amssymb,amscd,amsmath,enumerate,verbatim,calc}
\usepackage{color,soul}
\usepackage{mathtools}
\usepackage{graphicx}
\everymath{\displaystyle}
\usepackage{xcolor}
\usepackage{fancyhdr}
\usepackage{tikz}
\usetikzlibrary{shapes.geometric, arrows}
\tikzstyle{arrow} = [thick,->,>=stealth]
\tikzstyle{process} = [rectangle, minimum width=4cm, minimum height=2cm, text centered, text width=2.5cm, draw=green, fill=green!10]
\textwidth=15cm
\textheight=20.5cm
\topmargin=0.5cm
\oddsidemargin=0.5cm
\evensidemargin=0.5cm
\pagestyle{plain}

\newcommand{\C}{\mathbb{C} }

\theoremstyle{plain}

\newtheorem{theorem}{Theorem}[section]
\newtheorem{corollary}[theorem]{Corollary}

\newtheorem{proposition}[theorem]{Proposition}

\theoremstyle{definition}
\newtheorem{definition}[theorem]{Definition}

\newtheorem{remark}[theorem]{Remark}
\newtheorem{example}[theorem]{Example}

\begin{document}
\title{On the Rank of a bicomplex matrix}
\author{amita}
\address{Department of Mathematics, IGNTU, Amarkantak, Madhya Pradesh 484887, India}

\email{amitasharma234@gmail.com}
\author{Mamta Amol Wagh}
\address{Department of Mathematics, University of Delhi, New Delhi 110078, India}
\email{mamtanigam@ddu.du.ac.in}

\author{Suman Kumar}
\address{Department of Mathematics, IGNTU, Amarkantak, Madhya Pradesh 484887, India}
\email{suman@igntu.ac.in}
\author{Akhil Prakash}
\address{Department of Mathematics, Aligarh Muslim University, Aligarh, Uttar Pradesh 202002, India}
\email{gm5537@myamu.ac.in}
\keywords{Bicomplex Numbers, Bicomplex matrix, Vector Space, Rank.}
\subjclass[IMS]{Primary 15A04, 15A30; Secondary 30G35}

\begin{abstract}
The paper explores the concept of the rank of a bicomplex matrix, delving into four distinct types of ranks and investigating conditions under which these ranks are equivalent. It also defines and analyzes the concept of idempotent row space and idempotent column space of a bicomplex matrix. Some examples and counter examples have been presented to substantiate the study. 
\end{abstract}
\maketitle

\section{Introduction and preliminaries}
In recent years, the theory of bicomplex numbers has become a thriving area of mathematical research, undergoing significant advancements and branching into new directions. In the historical development of bicomplex numbers, Segre is credited with its initial introduction \cite{segre1892}. Although not exhaustive, resources such as Price \cite{price2018introduction} offer a comprehensive foundation for multicomplex number theory. Notably, recent research efforts, (see \cite{alpay2014basics, futagawa1928, futagawa1932, {gervais2011finite}, {luna2015bicomplex}, riley1953, rochon2004, srivastava2008}) have further propelled advancements in this field.

In this section, we provide a summary of several properties associated with bicomplex numbers.

\noindent {\bf Bicomplex numbers:} A bicomplex number can be represented as
 ${u}_{1}  + {i}_{1} {u}_{2}  + {i}_{2} {u}_{3}  + {i}_{1} {i}_{2} {u}_{4}$, where ${u_k}$ are the real numbers for $k=1$ to $4$   and $i_1$, $i_2$ are  unit vectors satisfying the properties $i_1 i_2=i_2 i_1, i_1^2=i_2^2=-1$. For the sake of convenience, we employed certain symbols $\C_0, \C_1$, and $\C_2$, for the set of real numbers, the set of complex numbers, and the set of bicomplex numbers respectively. The set of bicomplex numbers can be represented in three different ways
\begin{eqnarray*}
 \mathbb{C}_{2} &=& \{{u}_{1}  + {i}_{1} {u}_{2}  + {i}_{2} {u}_{3}  + {i}_{1} {i}_{2} {u}_{4}\;:\; {u}_{1} , {u}_{2} , {u}_{3} , {u}_{4} \in\mathbb{C}_{0}\}, \\
 \mathbb{C}_{2} &=& \{{z}_{1} + {i}_{2} {z}_{2}\;:\;  {z}_{1}, {z}_{2} \in \mathbb{C}_{1}\},\\
\mathbb{C}_{2} &=& \{1_\xi e_1 + 2_\xi e_2\;:\;  {1_\xi=({z}_{1}-i_1 {z}_{2}), 2_\xi=({z}_{1}+i_1 {z}_{2}) \in \mathbb{C}_{1}}\}.
\end{eqnarray*}

The introduction of two numbers, $e_1 = \frac{(1 + i_1 i_2)}{2}$ and $e_2 = \frac{(1 - i_1 i_2)}{2}$, significantly simplifies the system of bicomplex numbers $\C_2$. These numbers, $e_1$ and $e_2$ serve as zero divisors in $\C_2$. Furthermore, the numbers $e_1$ and $e_2$ form a basis for $\C_2$, which is called the idempotent basis. Thus, every bicomplex number $\xi$ can be written in a unique way as:
$\xi = 1_\xi e_1 + 2_\xi e_2$. This representation is called the idempotent representation of a bicomplex number $\xi$.
The idempotent representation gives algebraic structure to the bicomplex space, for instance the product of two bicomplex numbers can be seen component-wise in the above idempotent representation, i.e. if $\xi=1_\xi e_1 + 2_\xi e_2$ and $\eta=1_\eta e_1 + 2_\eta e_2$ in $\C_2$, then    
\begin{eqnarray*}
\xi \cdot \eta = (1_\xi \cdot 1_\eta) e_1 + (2_\xi \cdot 2_\eta) e_2.  \end{eqnarray*}
The set of singular elements in $\C_2$ is denoted by $O_2$ and defined as the collection of all non-invertible elements in $\C_2$.

\begin{definition}
\noindent({\bf Principal ideals} \cite{price2018introduction})
The principal ideals in $\C_2$ determined by $e_1$ and $e_2$, denoted by $I_1$ and $I_2$, respectively are defined as:
\begin{eqnarray*}
I_1=:\{\xi e_1:\xi \in \C_2\}=\{1_\xi e_1:1_\xi \in \C_1\},\\
I_2=:\{\xi e_2:\xi \in \C_2\}=\{2_\xi e_2:2_\xi \in \C_1\}.   
\end{eqnarray*}
\end{definition}
\begin{theorem}\label{th1}
\cite{price2018introduction} A bicomplex number $\xi=(Z_1+i_2Z_2)$ is singular if and only if $\xi \in I_1 \cup I_2$. 
\end{theorem}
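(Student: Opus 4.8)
The plan is to exploit the idempotent representation $\xi = 1_\xi e_1 + 2_\xi e_2$ together with the component-wise multiplication formula recalled above, where $1_\xi = Z_1 - i_1 Z_2$ and $2_\xi = Z_1 + i_1 Z_2$. First I would record that the multiplicative identity of $\C_2$ is $1 = e_1 + e_2$, and then show that $\xi$ is invertible in $\C_2$ if and only if both complex numbers $1_\xi$ and $2_\xi$ are nonzero. Indeed, for $\eta = 1_\eta e_1 + 2_\eta e_2$ we have $\xi\eta = (1_\xi\, 1_\eta)e_1 + (2_\xi\, 2_\eta)e_2$, so by uniqueness of the idempotent representation the equation $\xi\eta = 1$ is equivalent to the pair $1_\xi\, 1_\eta = 1$ and $2_\xi\, 2_\eta = 1$. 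Since $\C_1$ is a field, such $1_\eta, 2_\eta$ exist precisely when $1_\xi \ne 0$ and $2_\xi \ne 0$, in which case $\eta = 1_\xi^{-1} e_1 + 2_\xi^{-1} e_2$ is the (two-sided) inverse.

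Consequently $\xi$ is singular if and only if $1_\xi = 0$ or $2_\xi = 0$. For the forward implication, if $1_\xi = 0$ then $\xi = 2_\xi e_2 \in I_2$, and if $2_\xi = 0$ then $\xi = 1_\xi e_1 \in I_1$; hence $\xi \in I_1 \cup I_2$. For the converse, suppose $\xi \in I_1$, say $\xi = 1_\xi e_1$; then its second idempotent component vanishes, so $\xi$ fails the invertibility criterion above — equivalently $\xi e_2 = (1_\xi\cdot 0)e_1 + (0\cdot 2_\xi)e_2 = 0$ exhibits $\xi$ as a zero divisor — and thus $\xi \in O_2$. The case $\xi \in I_2$ is symmetric.

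One small preliminary point to make explicit is the equality of the two descriptions of the principal ideals given in the definition, namely $\{\xi e_1 : \xi \in \C_2\} = \{1_\xi e_1 : 1_\xi \in \C_1\}$ and similarly for $I_2$; this is immediate from $e_1^2 = e_1$, $e_1 e_2 = 0$ and the component-wise product, and it is what lets us read off the vanishing of an idempotent component directly from membership in $I_1$ or $I_2$. The argument presents no genuine obstacle; the only points requiring care are the appeal to uniqueness of the idempotent representation when passing from $\xi\eta = e_1 + e_2$ to the two scalar equations, and the observation that $e_1$ and $e_2$ are themselves the prototypical zero divisors, so that membership in $I_1 \cup I_2$ really does force singularity rather than merely being necessary for it.
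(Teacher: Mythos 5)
Your proof is correct and complete. Note that the paper itself offers no proof of this statement --- it is quoted from Price's book as background --- so there is nothing internal to compare against; your argument via the idempotent components (invertibility of $\xi$ is equivalent to $1_\xi \ne 0$ and $2_\xi \ne 0$, by uniqueness of the idempotent representation and the component-wise product) is the standard one and handles both directions cleanly, including the small but necessary point that the two descriptions of $I_1$ and $I_2$ coincide.
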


\begin{theorem}\label{th2}
\cite{price2018introduction} A bicomplex number $\xi=(Z_1+i_2Z_2)$ is non-singular if and only if $\xi \notin I_1 \cup I_2$.
\end{theorem}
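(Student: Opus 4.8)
The plan is to derive this statement directly from Theorem \ref{th1} by logical complementation. By definition, a bicomplex number is \emph{non-singular} exactly when it fails to be singular, i.e. when it is invertible in $\C_2$. Theorem \ref{th1} states that $\xi$ is singular if and only if $\xi \in I_1 \cup I_2$; negating both sides of this biconditional immediately gives that $\xi$ is non-singular if and only if $\xi \notin I_1 \cup I_2$. In this sense the proof is a one-line deduction, and the bulk of the work has already been carried out in the proof of Theorem \ref{th1}.

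For completeness I would also include a short self-contained verification via the idempotent representation, which exhibits the statement transparently. Write $\xi = 1_\xi e_1 + 2_\xi e_2$ with $1_\xi, 2_\xi \in \C_1$. Recall that $e_1 + e_2$ is the multiplicative identity of $\C_2$ and that multiplication is component-wise, $\xi \eta = (1_\xi \, 1_\eta) e_1 + (2_\xi \, 2_\eta) e_2$ for $\eta = 1_\eta e_1 + 2_\eta e_2$. Consequently $\xi$ is invertible precisely when the scalar equations $1_\xi \, 1_\eta = 1$ and $2_\xi \, 2_\eta = 1$ are solvable in $\C_1$, which, since $\C_1$ is a field, happens if and only if $1_\xi \neq 0$ and $2_\xi \neq 0$. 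From the definitions of the principal ideals, $\xi \in I_1$ forces $2_\xi = 0$ and $\xi \in I_2$ forces $1_\xi = 0$, so $\xi \in I_1 \cup I_2$ holds exactly when $1_\xi = 0$ or $2_\xi = 0$. Putting these together, $\xi$ is non-singular if and only if $1_\xi \neq 0$ and $2_\xi \neq 0$, which is the case if and only if $\xi \notin I_1 \cup I_2$.

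There is essentially no serious obstacle to overcome here. The only points deserving a moment's attention are matters of bookkeeping: that \emph{non-singular} is the exact negation of \emph{singular}, so that Theorems \ref{th1} and \ref{th2} are genuinely complementary statements; and, in the direct approach, the elementary fact that $\C_1$ being a field is precisely what promotes a nonzero idempotent component to an invertible one. Beyond these observations, the result is immediate.
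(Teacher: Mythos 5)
The paper offers no proof of this statement: both Theorem \ref{th1} and Theorem \ref{th2} are quoted from Price \cite{price2018introduction} and stated without argument, so there is nothing internal to compare against. Your proposal is correct. Since the paper defines the singular elements $O_2$ as precisely the non-invertible elements of $\C_2$, ``non-singular'' is the exact negation of ``singular,'' and Theorem \ref{th2} follows from Theorem \ref{th1} by negating both sides of the biconditional, as you say. Your supplementary verification via the idempotent representation is also sound --- invertibility of $\xi = 1_\xi e_1 + 2_\xi e_2$ reduces, by uniqueness of the idempotent representation of $1 = e_1 + e_2$ and component-wise multiplication, to $1_\xi \neq 0$ and $2_\xi \neq 0$, which is exactly the condition $\xi \notin I_1 \cup I_2$ --- though note that this second argument is really a proof of Theorem \ref{th1} itself rather than an independent route to Theorem \ref{th2}.
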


\begin{definition}\label{th5}
(\noindent{\bf Cartesian product}
\cite{anjali2023matrix}) The $n$-times Cartesian product of $\C_2$ is denoted by $\C_2^n$ and defined as
\begin{eqnarray*}
\mathbb C_2^n=\{(\xi_1,\xi_2,\ldots,\xi_n):\xi_i \in \C_2;i=1,2,\ldots,n\}.     
\end{eqnarray*}
Moreover, analogous to the idempotent representation of an element in $\C_2$, every element of $\C_2^n$ can be expressed uniquely as
\begin{eqnarray*}
 (\xi_1,\xi_2,\ldots,\xi_n)= (1_{\xi_1},1_{\xi_2},\ldots,1_{\xi_n})e_1 + (2_{\xi_1},2_{\xi_2},\ldots,2_{\xi_n}) e_2,
\end{eqnarray*}
where $(1_{\xi_1},1_{\xi_2},\ldots,1_{\xi_n}),(2_{\xi_1},2_{\xi_2},\ldots,2_{\xi_n})$ are $n$-tuples of complex numbers.
\end{definition}
\begin{remark}\label{th49}\cite{anjali2023matrix} 
If $(\xi_1,\xi_2,\ldots,\xi_n), (\eta_1,\eta_2,\ldots,\eta_n)\in \C_2^n$ and $\kappa \in \C_2$, then
\begin{itemize}
\item 
$(\xi_1,\xi_2,\ldots,\xi_n)\cdot (\eta_1,\eta_2,\ldots,\eta_n)=(\xi_1\eta_1,\xi_2\eta_2,\ldots,\xi_n\eta_n).$
\item 
$(\xi_1,\xi_2,\ldots,\xi_n)=(\eta_1,\eta_2,\ldots,\eta_n) \iff 1\xi_i=1\eta_i \ and \ 2\xi_i=2\eta_i\ \forall \ i=1,2,\ldots n$.
\item 
$\kappa(\xi_1,\xi_2,\ldots,\xi_n) = (\xi_1,\xi_2,\ldots,\xi_n) \kappa =(\kappa\xi_1 ,\kappa \xi_2,\ldots,\kappa \xi_n)$.
\end{itemize} 
Consequently, the product
\begin{eqnarray*}
(\xi_1,\xi_2,\ldots,\xi_n)e_i=(\xi_1e_i,\xi_2e_i,\ldots,\xi_n e_i)=(i_{\xi_1}e_i,i_{\xi_2}e_i,\ldots,i_{\xi_n} e_i) ; i=1,2.
\end{eqnarray*}
This conclusion arises from the fact that $\C_2^n$ is not only a $\C_2$ module but also exhibits the structure of a $\C_1$-algebra.
\end{remark}

\begin{definition}\label{th6}
\noindent{\bf (Bicomplex matrix)}
The set $\C_2^{n\times m}$ of bicomplex matrices of order ${n\times m}$ is defined as
\begin{eqnarray*}
\C_2^{n\times m}=:\{[\xi_{ij}]_{n\times m}:\xi_{ij}\in \C_2;1\leq i \leq n, 1\leq j \leq m \}.
\end{eqnarray*}
The set $\C_2^{n\times n}$ with respect to ordinary multiplication, addition and scalar multiplication forms an algebra over $\C_1$. Similar to the concept of idempotent representation for elements in $\C_2$, every bicomplex matrix can be uniquely expressed as 
\[ 
[\xi_{ij}]_{n\times m}=[1_{\xi_{ij}}]_{n\times m}e_1 + [2_{\xi_{ij}}]_{n\times m}e_2; 1\leq i \leq n, 1\leq j \leq m,  
\]
where $[1_{\xi_{ij}}]_{n\times m}$ and $[2_{\xi_{ij}}]_{n\times m}$ are complex matrices of order $n\times m$, i.e. if
$A=[\xi_{ij}]_{n\times m}\in \C_2^{n\times m}$ then $A=1_Ae_1+2_Ae_2$, where $1_A=[1_{\xi_{ij}}]_{n\times m}\in \C_1^{n\times m}$, $2_A=[2_{\xi_{ij}}]_{n\times m}\in \C_1^{n\times m}$ and $\C_1^{n\times m}$ denotes the set of all complex matrices of order ${n\times m}$.
\end{definition}

\begin{definition}
\noindent{\bf (Bicomplex singular and non-singular matrix)} If $A\in C_2^{n\times n}$, then $A$ is said to be non-singular if determinant of $A$ is non-singular element, i.e. $\det(A)\notin O_2$ and if determinant of $A$ is singular element, i.e. $\det(A)\in O_2$, then it is called singular.
    
\end{definition} 

\begin{theorem}\label{th7}\cite{price2018introduction}
If $A$ is a bicomplex matrix of order $n\times n$, then
$\det(A) = \det(1_A) e_1 + \det(2_A) e_2 $.
\end{theorem}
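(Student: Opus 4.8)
The plan is to read $\det(A)$ off the Leibniz permutation expansion and transport the idempotent splitting through it. Write $A=[\xi_{ij}]_{n\times n}$ with $\xi_{ij}=1_{\xi_{ij}}e_1+2_{\xi_{ij}}e_2$, so that by Definition~\ref{th6} we have $1_A=[1_{\xi_{ij}}]_{n\times n}$ and $2_A=[2_{\xi_{ij}}]_{n\times n}$. The only algebraic facts needed are $e_1+e_2=1$, $e_1e_2=0$, $e_i^2=e_i$, and the consequence recorded earlier that in the idempotent representation a product of two bicomplex numbers is taken componentwise. By a trivial induction on the number of factors this extends to any finite product, so that for each $\sigma\in S_n$,
\[
\prod_{i=1}^{n}\xi_{i\sigma(i)}=\Big(\prod_{i=1}^{n}1_{\xi_{i\sigma(i)}}\Big)e_1+\Big(\prod_{i=1}^{n}2_{\xi_{i\sigma(i)}}\Big)e_2 .
\]

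Next I would multiply by the integer sign $\text{sgn}(\sigma)$ --- which inside $\C_2$ is $\text{sgn}(\sigma)e_1+\text{sgn}(\sigma)e_2$ and hence scales each idempotent component --- and sum over $S_n$, again componentwise, to get
\[
\det(A)=\sum_{\sigma\in S_n}\text{sgn}(\sigma)\prod_{i=1}^{n}\xi_{i\sigma(i)}
=\Big(\sum_{\sigma\in S_n}\text{sgn}(\sigma)\prod_{i=1}^{n}1_{\xi_{i\sigma(i)}}\Big)e_1
+\Big(\sum_{\sigma\in S_n}\text{sgn}(\sigma)\prod_{i=1}^{n}2_{\xi_{i\sigma(i)}}\Big)e_2 .
\]
The two bracketed sums are exactly $\det(1_A)$ and $\det(2_A)$ by the same Leibniz formula applied to the complex matrices $1_A$ and $2_A$, which yields $\det(A)=\det(1_A)e_1+\det(2_A)e_2$.

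I do not expect a genuine obstacle; the one point worth a sentence of care is that the Leibniz formula is a polynomial identity with integer coefficients and therefore holds over the commutative ring $\C_2$ even though $\C_2$ has zero divisors, so at no stage do we need the entries $\xi_{ij}$ to be invertible. Equivalently, one may package the argument by noting that $\xi\mapsto(1_\xi,2_\xi)$ is a ring isomorphism $\C_2\to\C_1\times\C_1$, extending it entrywise to matrices, and invoking the fact that $\det$ commutes with any ring homomorphism because it is a fixed polynomial in the matrix entries; evaluating on $A$ and pulling back along the isomorphism gives the claim. If one prefers to avoid the permutation formula altogether, a third route is induction on $n$ via cofactor expansion along the first row, where the inductive step again uses only componentwise addition and multiplication in the idempotent representation together with the induction hypothesis applied to the $(n-1)\times(n-1)$ minors of $1_A$ and $2_A$.
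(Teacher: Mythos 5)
Your proof is correct. Note, however, that the paper itself offers no proof of this statement: it is quoted verbatim from Price's book \cite{price2018introduction} as background, so there is no in-paper argument to compare yours against. Your derivation --- expanding $\det(A)$ by the Leibniz formula over the commutative ring $\C_2$ and pushing the idempotent splitting through products, integer scalars, and sums --- is the standard and essentially unavoidable argument, and your remark that the map $\xi\mapsto(1_\xi,2_\xi)$ is a ring isomorphism $\C_2\to\C_1\times\C_1$ commuting with the determinant polynomial is the cleanest way to package it. The only presentational caveat is that the paper never formally defines $\det$ of a bicomplex matrix, so if this proof were inserted one would want a sentence fixing the Leibniz (or cofactor) formula over $\C_2$ as the definition before invoking it.
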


\begin{corollary}\label{th9}\cite{price2018introduction}
Let $A$ be a bicomplex matrix of order $n\times n$, then $A$ is non-singular if and only if $\det(1_A) \neq0$ and $\det(2_A) \neq0$.  \end{corollary}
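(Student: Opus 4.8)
The plan is to deduce this directly from Theorem \ref{th7}, combined with the characterization of singular bicomplex numbers given in Theorem \ref{th1} (equivalently Theorem \ref{th2}) and the uniqueness of the idempotent representation.

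First I would record an elementary observation about the idempotent decomposition: a bicomplex number $\xi = 1_\xi e_1 + 2_\xi e_2$ lies in $I_1 \cup I_2$ if and only if $1_\xi = 0$ or $2_\xi = 0$. Indeed, from the definition of the principal ideals, $I_1 = \{1_\zeta e_1 : 1_\zeta \in \C_1\}$ consists precisely of those bicomplex numbers whose $e_2$-component vanishes, and likewise $I_2$ consists of those whose $e_1$-component vanishes; since the idempotent representation of an element of $\C_2$ is unique, $\xi \in I_1$ exactly when $2_\xi = 0$ and $\xi \in I_2$ exactly when $1_\xi = 0$. Feeding this into Theorem \ref{th1}, we get that $\xi \in O_2$ if and only if $1_\xi = 0$ or $2_\xi = 0$; contrapositively, $\xi \notin O_2$ if and only if $1_\xi \neq 0$ and $2_\xi \neq 0$.

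Next I would apply this with $\xi = \det(A)$. By Theorem \ref{th7} we have $\det(A) = \det(1_A) e_1 + \det(2_A) e_2$, so by uniqueness of the idempotent representation the $e_1$- and $e_2$-components of $\det(A)$ are exactly $\det(1_A)$ and $\det(2_A)$, which are ordinary complex numbers. By definition $A$ is non-singular precisely when $\det(A) \notin O_2$, and by the previous paragraph this holds if and only if $\det(1_A) \neq 0$ and $\det(2_A) \neq 0$, which is the claim.

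I do not expect any genuine obstacle here: the result is an immediate corollary. The only step needing (minor) care is the first one, namely identifying membership in $I_1 \cup I_2$ with the vanishing of one of the idempotent components, and this relies solely on the definitions of $I_1$ and $I_2$ and on the uniqueness of the idempotent decomposition, both already established in the excerpt.
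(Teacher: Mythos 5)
Your proof is correct. Note that the paper itself gives no proof of this corollary --- it is quoted from Price \cite{price2018introduction} --- so there is nothing to diverge from; your derivation (identify membership in $I_1\cup I_2$ with the vanishing of an idempotent component via uniqueness of the representation, then apply Theorem \ref{th1} and Theorem \ref{th7} to $\xi=\det(A)$) is exactly the standard argument one would expect, and every step is justified by material already in the excerpt.
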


\begin{corollary}\label{th10}\cite{price2018introduction}
Let $A$ be a bicomplex matrix of order $n\times n$, then $A$ is singular if and only if  either $\det(1_A)=0$ or $\det(2_A)=0$.
\end{corollary}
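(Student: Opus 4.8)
The plan is to obtain this corollary directly from the characterization of singular bicomplex numbers in Theorem~\ref{th1} together with the component-wise determinant formula in Theorem~\ref{th7}; indeed, it can be read off as the logical negation of Corollary~\ref{th9}.

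First I would unwind the definition: $A\in\C_2^{n\times n}$ is singular precisely when $\det(A)\in O_2$, i.e. $\det(A)$ is a non-invertible bicomplex number. By Theorem~\ref{th1}, this happens if and only if $\det(A)\in I_1\cup I_2$. The one small observation to insert at this point is that, because the idempotent decomposition $\xi = 1_\xi e_1 + 2_\xi e_2$ is unique, membership in the principal ideals is detected by the components: $\xi\in I_1 \iff 2_\xi = 0$ and $\xi\in I_2 \iff 1_\xi = 0$, so $\xi\in I_1\cup I_2$ if and only if $1_\xi = 0$ or $2_\xi = 0$.

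Next I would apply Theorem~\ref{th7}, which gives $\det(A) = \det(1_A)e_1 + \det(2_A)e_2$; that is, the idempotent components of the bicomplex number $\det(A)$ are exactly the complex numbers $\det(1_A)$ and $\det(2_A)$. Combining with the previous step, $\det(A)\in O_2$ if and only if $\det(1_A)=0$ or $\det(2_A)=0$, which is the claimed equivalence. (Alternatively, since ``singular'' is by definition the negation of ``non-singular'', one may simply negate Corollary~\ref{th9}: the conjunction $\det(1_A)\neq 0$ and $\det(2_A)\neq 0$ has De Morgan dual $\det(1_A)=0$ or $\det(2_A)=0$.)

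There is essentially no obstacle here. The only step requiring a word of justification is the equivalence $\xi\in I_1\cup I_2 \iff 1_\xi = 0 \text{ or } 2_\xi = 0$, and that is immediate from the uniqueness of the idempotent representation recorded in the preliminaries; everything else is bookkeeping with definitions already in hand.
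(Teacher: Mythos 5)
Your argument is correct: the paper states this corollary as a cited result without writing out a proof, and the intended derivation is exactly what you give, namely negating Corollary~\ref{th9} (equivalently, combining Theorem~\ref{th7} with the characterization of singular elements in Theorem~\ref{th1} and the observation that $\xi\in I_1\cup I_2$ iff one idempotent component vanishes). Your added justification of that membership criterion via uniqueness of the idempotent representation is the right supporting detail and nothing is missing.
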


\section{rank of Bicomplex matrix} \label{th11}
This section deals with the rank, row rank, and column rank of a bicomplex matrix. Here we explore some results on these type of ranks. We define the idempotent row space and idempotent column space of a bicomplex matrix, also investigate some results. 

\begin{definition}
Let $V$ be a subset of $\C_1^{n}$, then we define the set $Ve_i$ as follows
\begin{eqnarray*}
Ve_i=\{(x_1,x_2,\ldots,x_n      )e_i;(x_1,x_2,\ldots,x_n)      \in V \}, \mbox{for} \ i \in \{1,2\}.
\end{eqnarray*}
Furthermore, the set $Ve_i$ is a subset of $\C_2^n$.
\end{definition}

\begin{definition}({\bf $e_1$-matrix})\label{th12}
If $A=[\xi_{ij}]_{n\times m}\in \C_2^{n\times m}$ and $\xi_{ij}={1_\xi}_{ij}e_1$ for all $i$  and $j$, then $A$ is called $e_1$-matrix. 
\end{definition}

\begin{definition}({\bf $e_2$-matrix})\label{th13}
If $A=[\xi_{ij}]_{n\times m}\in \C_2^{n\times m}$ and $\xi_{ij}={2_\xi}_{ij}e_2$ for all $i$ and $j$,  then $A$ is called $e_2$-matrix. 
\end{definition}

\begin{definition}({\bf $e_1e_2$-matrix})\label{th14}
If $A=[\xi_{ij}]_{n\times m}\in \C_2^{n\times m}$ and $\xi_{ij}\in I_1\cup I_2$ for all $i$ and $j$, then $A$ is called $e_1e_2$- matrix. 
\end{definition}

\begin{remark}
Every $e_1$-matrix is an $e_1e_2$-matrix, also every $e_2$-matrix is an $e_1e_2$-matrix, but converse need not be true. For example 
\[
A =\begin{bmatrix}
e_1 & e_2\\
e_2 & e_1
\end{bmatrix}
\] is $e_1e_2$-matrix, but neither $A$ is  $e_1$-matrix nor $A$ is  $e_2$-matrix. 
\end{remark}

\begin{theorem}\label{th15}
If $B$ is a sub-matrix of a bicomplex matrix $A$, then $1_B$ and $2_B$ are sub-matrices of $1_A$ and $2_A$ respectively.    
\end{theorem}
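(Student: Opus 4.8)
The plan is to unwind the definition of a sub-matrix and then exploit the fact that the idempotent decomposition of a bicomplex matrix is performed entrywise, together with the uniqueness of that decomposition recorded in Definition~\ref{th6}. First I would fix notation: saying that $B$ is a sub-matrix of $A=[\xi_{ij}]_{n\times m}$ means that there exist indices $1\le i_1<i_2<\cdots<i_p\le n$ and $1\le j_1<j_2<\cdots<j_q\le m$ such that $B=[\xi_{i_k j_\ell}]_{1\le k\le p,\,1\le \ell\le q}$; that is, $B$ is obtained from $A$ by retaining exactly the rows indexed by $i_1,\dots,i_p$ and the columns indexed by $j_1,\dots,j_q$.

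Next, I would apply the idempotent representation to each entry of $B$, writing $\xi_{i_k j_\ell}=1_{\xi_{i_k j_\ell}}e_1+2_{\xi_{i_k j_\ell}}e_2$ and collecting these over $k$ and $\ell$ to get $B=\big[1_{\xi_{i_k j_\ell}}\big]_{p\times q}e_1+\big[2_{\xi_{i_k j_\ell}}\big]_{p\times q}e_2$. Since the two matrices on the right are complex matrices of order $p\times q$, the uniqueness of the idempotent representation of a bicomplex matrix (Definition~\ref{th6}) forces $1_B=\big[1_{\xi_{i_k j_\ell}}\big]_{p\times q}$ and $2_B=\big[2_{\xi_{i_k j_\ell}}\big]_{p\times q}$. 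Finally, comparing with the sub-matrix of $1_A=[1_{\xi_{ij}}]_{n\times m}$ cut out by the same row set $\{i_1,\dots,i_p\}$ and column set $\{j_1,\dots,j_q\}$, that sub-matrix is by definition $\big[1_{\xi_{i_k j_\ell}}\big]_{p\times q}=1_B$; hence $1_B$ is a sub-matrix of $1_A$, and the verbatim argument with $e_2$ in place of $e_1$ shows $2_B$ is a sub-matrix of $2_A$.

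I do not expect a genuine obstacle here: the whole content of the statement is that ``pass to the $e_1$-component'' and ``delete a prescribed set of rows and columns'' are both entrywise operations on the matrix and therefore commute. The only point that requires a little care is to invoke the \emph{uniqueness} of the idempotent decomposition in order to identify the matrices $1_B$ and $2_B$ precisely, rather than merely exhibiting \emph{some} decomposition of $B$ of the form $Pe_1+Qe_2$; once that is in hand, the conclusion is immediate.
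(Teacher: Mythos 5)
Your proof is correct and follows essentially the same route as the paper's own argument: both rest on the observation that deleting a prescribed set of rows and columns commutes with taking the entrywise $e_1$- and $e_2$-components. Your version is simply a more careful, fully indexed write-up (including the appeal to uniqueness of the idempotent decomposition) of what the paper states in two sentences.
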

\begin{proof}
Let  $B$ be a sub-matrix of a bicomplex matrix $A$. Therefore, $B$ is obtained from $A$ by omitting some rows and columns of $A$. Thus, we have sub-matrices $1_B$ and $2_B$, which are obtained from matrix $B$ by omitting some rows and some columns of $1_A$ and $2_A$ respectively. Hence, $1_B$ and $2_B$ are sub-matrices of $1_A$ and $2_A$ respectively.
\end{proof}

\begin{remark}
The converse of Theorem \ref{th15} is not true in general. Consider a bicomplex matrix
\begin{align*} 
A = \begin{bmatrix}
1 & e_1 & 0 \\
e_1 & e_2 & e_1\\
0 & 0 & 1 
\end{bmatrix}
=\begin{bmatrix}
1 & 1 & 0 \\
1 & 0 & 1 \\
0 & 0 & 1
\end{bmatrix} e_1 + \begin{bmatrix}
1 & 0 & 0 \\
0 & 1 & 0 \\
0 & 0 & 1
\end{bmatrix} e_2. 
\end{align*}
We see that
\[\begin{bmatrix}
 1  & 1\\
 0 & 0
\end{bmatrix} 
\quad and \quad
\begin{bmatrix}
 1  & 0\\
 0 & 1
\end{bmatrix}
\]    
are sub-matrices of $1_A$ and $2_A$, respectively. But
\begin{align*} 
B =\begin{bmatrix}
1 & 1\\
0 & 0
\end{bmatrix} e_1 + \begin{bmatrix}
1 & 0\\
0 & 1
\end{bmatrix} e_2
=\begin{bmatrix}
1 & e_1\\
0 & e_2
\end{bmatrix}
\end{align*}
is not a sub-matrix of $A$.
\end{remark}

\begin{definition}({\bf Rank of a bicomplex matrix})\label{th17}
A number $r$ is said to be the rank of a matrix $A\in \C_2^{n\times m}$, if it possesses the following properties:
\begin{itemize} 
\item
There exists a non-singular sub-matrix $B_{r}$ of $A$  and a series of non-singular sub-matrices of the matrix $B_{r}$ such that
\begin{equation}\label{eq1}
B_1 \preccurlyeq B_2\preccurlyeq \ldots \preccurlyeq B_{r-1} \preccurlyeq B_{r},
\end{equation}
where $B_i \preccurlyeq B_{i+1}$ represents that $B_i$ is a sub-matrix of  $B_{i+1}$, and $i$ represents the order of the matrix $B_i$.
\item
If $B'$ is any sub-matrix of $A$ with order $t; t>r$, then no series of the type (\ref{eq1}) exists  for the  matrix $B'$.
\end{itemize}
\end{definition}
The rank of a matrix A is denoted by $\rho(A)$. Furthermore,
If no such $r$ exists, then we define $\rho(A)=0$.

\begin{remark} 
The definition \ref{th17} yields that
\begin{enumerate}
\item
 If $r$ is the rank of a bicomplex matrix $A$, then there exists a non-singular matrix $B$ of order $r$ such that $B\preccurlyeq A$ and $\rho(B)=r$, and any matrix $B'$ of order $t; t> r$ such that $B'\preccurlyeq A$, then $\rho(B') \leq r$.
\item 
If $A\in \C_2^{n\times m}$, then $\rho(A) \le$ min$(m,n)$.
\item 
If $A\in \C_2^{n\times n}$ and $\rho(A) = n$, then $\det(A) \notin O_2$. But converse need not be true. For example, if
$A =\begin{bmatrix}
e_1 & e_2\\
e_2 & e_1
\end{bmatrix}$,
then $\rho(A) = 0$, but $\det(A) \notin O_2$.
\item 
If $A\in \C_2^{n\times m}$ and $\rho(A) = 0$, then $A$ need not be null matrix. For example, the rank of $e_1e_2$-matrix is zero, but it is not a null matrix.
\end{enumerate}
\end{remark}

\begin{theorem}
Let $A$ be a bicomplex matrix of order $n\times m$. Then, $\rho(A)=0$ if and only if $A$ is $e_1e_2$- matrix.
\begin{proof}
Let $A=[\xi_{ij}]_{n\times m} \in \C_2^{n \times m}$, and $\rho(A)=0$.
Let us assume that there exists an entry $\xi_{ps}$ of $A$ such that $\xi_{ps} \notin O_2$ for some $p \in \{1,2,...,n\}$ and $s \in \{1,2,...,m\}$.
Therefore the matrix $B=[\xi_{ps}]_{1\times 1}$ is a non-singular sub-matrix of $A$ and has a series of the type (\ref{eq1}). There are two cases here:\\
\textbf{Case (i):} If no series of the type (\ref{eq1}) exists  for all sub-matrix $B'$ of $A$  with order $t; t>1$, then $\rho(A)=1$.\\
\textbf{Case (ii):} If there is a non-singular sub-matrix $B'$ of $A$ with order $t; t>1$ such that a series of the type (\ref{eq1}) exists  for the matrix $B'$, then $\rho(A) \geq t$.\\
Clearly, in each case $\rho(A) \geq 1$, which contradicts as $\rho(A)=0$. So, our assumption, $\xi_{ps} \notin O_2$ for some $p \in \{1,2,...,n\}$ and $s \in \{1,2,...,m\}$, is not true. Thus, $\xi_{ps} \in O_2$ for all $p \in \{1,2,...,n\}$ and $s \in \{1,2,...,m\}$. Hence $A$ is $e_1e_2$- matrix.\\
\noindent{\bf Conversely} Suppose $A=[\xi_{ij}]_{n\times m}$ is an $e_1e_2$- matrix. Let us assume that $\rho(A)=r,r \geq 1$.
Then there exists a non-singular sub-matrix $B_{r}$ of $A$ with order $r; r \geq 1$ such that a series $B_1 \preccurlyeq B_2\preccurlyeq \ldots \preccurlyeq B_{r-1} \preccurlyeq B_{r},$ of the type (\ref{eq1}) exists  for the matrix $B_{r}$.
Clearly, $B_1$ is a non-singular sub-matrix of $A$ with order 1. This implies that there exists an entry $\xi_{ps}$ of $A$ such that $\xi_{ps} \notin O_2$ for some $p \in \{1,2,...,n\}$ and $s \in \{1,2,...,m\}$, which contradicts as $A$ is $e_1e_2$- matrix. Therefore our assumption $\rho(A)=r,r \geq 1$ is not true.
Hence $\rho(A)=0$.
\end{proof}
\end{theorem}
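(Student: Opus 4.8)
The plan is to prove both implications by contradiction, resting on two elementary facts. First, by Definition \ref{th17} any admissible chain $B_1\preccurlyeq B_2\preccurlyeq\cdots\preccurlyeq B_r$ of non-singular submatrices of $A$ must, at its bottom, contain a non-singular submatrix $B_1$ of order $1$, i.e.\ a single non-singular entry of $A$ (a submatrix of a submatrix of $A$ is a submatrix of $A$, and a $1\times1$ matrix $[\xi]$ is non-singular exactly when $\xi\notin O_2$). Second, by Theorem \ref{th1} a bicomplex number is singular precisely when it belongs to $I_1\cup I_2$, so the assertion ``$A$ is an $e_1e_2$-matrix'' is literally the statement that every entry of $A$ is singular.

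First I would treat the forward direction. Assume $\rho(A)=0$ and suppose, for contradiction, that some entry $\xi_{ps}$ of $A$ is non-singular. Then $B=[\xi_{ps}]_{1\times1}$ is a non-singular submatrix of $A$ carrying a (trivial, length-one) chain of type (\ref{eq1}). Hence the set of integers $r\ge1$ for which $A$ admits such a chain is non-empty; being bounded above by $\min(m,n)$ it has a largest element, which by Definition \ref{th17} is exactly $\rho(A)$, so $\rho(A)\ge1$, contradicting $\rho(A)=0$. Therefore every entry of $A$ is singular, hence lies in $I_1\cup I_2$ by Theorem \ref{th1}, and $A$ is an $e_1e_2$-matrix.

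For the converse I would assume $A=[\xi_{ij}]_{n\times m}$ is an $e_1e_2$-matrix and suppose, for contradiction, that $\rho(A)=r$ with $r\ge1$. By Definition \ref{th17} there is a non-singular submatrix $B_r$ of order $r$ and a chain $B_1\preccurlyeq B_2\preccurlyeq\cdots\preccurlyeq B_r$ with $B_i$ of order $i$. In particular $B_1$ is a non-singular $1\times1$ submatrix, i.e.\ a non-singular entry of $A$; but every entry of an $e_1e_2$-matrix lies in $I_1\cup I_2$ and is therefore singular by Theorem \ref{th1}, a contradiction. Hence no $r\ge1$ qualifies, so $\rho(A)=0$.

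The one place where some care is needed, the main obstacle (though a mild one), is the step in the forward direction that upgrades ``$A$ has a non-singular entry'' to ``$\rho(A)\ge1$''. One may not conclude $\rho(A)=1$ directly, because $A$ might also contain a larger non-singular submatrix carrying its own chain; the clean way around this is to note that $\rho(A)$ is by definition the maximal length of such a chain and that this maximum is attained since chain lengths cannot exceed $\min(m,n)$. Alternatively, one may split into two cases (either no chain of order $t>1$ exists, giving $\rho(A)=1$, or one does, giving $\rho(A)\ge t>1$); in either case $\rho(A)\ge1$. The remaining steps are just a direct unwinding of the definitions together with Theorem \ref{th1}.
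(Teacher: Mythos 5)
Your proposal is correct and follows essentially the same route as the paper's own proof: both directions argue by contradiction, reducing everything to the observation that any chain of type (\ref{eq1}) bottoms out at a non-singular $1\times 1$ submatrix, i.e.\ a non-singular entry, which exists precisely when $A$ fails to be an $e_1e_2$-matrix (Theorem \ref{th1}). Your maximality argument for upgrading ``$A$ has a non-singular entry'' to ``$\rho(A)\ge 1$'' is a slightly cleaner packaging of the paper's two-case split, which you also note as an alternative.
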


\begin{definition}({\bf Row rank of a matrix})\label{th19}
 If $A=[\xi_{ij}]_{n\times m} \in \C_2^{n \times m}$, then the row space of $A$ is defined as the subspace of vector-space $\C_2^m(\C_1)$ spanned by the rows of $A$. The
 row rank of $A$ is denoted by "$\rho_r(A)$" and defined as the dimension of row space of $A$.  
\end{definition} 
\begin{definition}({\bf Column rank of a matrix})\label{th20}
 If $A=[\xi_{ij}]_{n\times m} \in \C_2^{n \times m}$, then the column space of $A$ is defined as the subspace of vector-space $\C_2^n(\C_1)$ spanned by the columns of $A$. The
 column rank of $A$ is denoted by "$\rho_c(A)$" and defined as the dimension of column space of $A$.
\end{definition}

\begin{theorem} \label{th18}
Let $A$ be a bicomplex matrix of order $n\times m$. Then, 
\begin{center}
$\rho(A)\le min \ (\rho(1_A),\rho(2_A))$.
\end{center}
\end{theorem}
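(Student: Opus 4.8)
The plan is to unwind Definition \ref{th17} and reduce the statement to the classical fact that the rank of a complex matrix equals the largest order of an invertible square submatrix (equivalently, of a non-vanishing minor). First I would dispose of the trivial case $\rho(A)=0$: since the ordinary ranks $\rho(1_A)$ and $\rho(2_A)$ of the complex matrices $1_A,2_A$ are non-negative, the inequality $\rho(A)\le\min(\rho(1_A),\rho(2_A))$ holds automatically. So assume $\rho(A)=r\ge 1$.

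By Definition \ref{th17}, there exists a non-singular submatrix $B_r$ of $A$ of order $r$. (Note that for this direction only the \emph{existence} of $B_r$ is needed; the chain (\ref{eq1}) plays no role here.) Applying Theorem \ref{th15} to $B_r\preccurlyeq A$, the complex matrices $1_{B_r}$ and $2_{B_r}$ are $r\times r$ submatrices of $1_A$ and $2_A$, respectively. By Corollary \ref{th9}, the non-singularity of the bicomplex matrix $B_r$ is equivalent to $\det(1_{B_r})\neq 0$ and $\det(2_{B_r})\neq 0$. Hence $1_A$ possesses an invertible $r\times r$ submatrix, and so does $2_A$; by the classical minor characterisation of rank this gives $\rho(1_A)\ge r$ and $\rho(2_A)\ge r$. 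Combining, $r\le\min(\rho(1_A),\rho(2_A))$, that is, $\rho(A)\le\min(\rho(1_A),\rho(2_A))$.

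I do not expect a genuine obstacle here: the whole content is the observation that bicomplex non-singularity of a square submatrix is exactly simultaneous non-vanishing of its two component determinants (Corollary \ref{th9}), together with the passage of submatrices through the idempotent decomposition (Theorem \ref{th15}). The only point needing a little care is to state explicitly that $\rho(1_A)$ and $\rho(2_A)$ denote the usual rank of complex matrices and to invoke the standard equivalence ``rank $=$ size of the largest non-zero minor''. It is worth remarking afterwards that the bound is in general strict: for $A=\begin{bmatrix} e_1 & e_2\\ e_2 & e_1\end{bmatrix}$ one has $\rho(1_A)=\rho(2_A)=2$ while $\rho(A)=0$, which also shows why no reverse inequality can hold.
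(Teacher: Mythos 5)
Your proof is correct and takes essentially the same route as the paper's: both arguments extract a non-singular square submatrix of order $\rho(A)$, use Corollary \ref{th9} to see that its two idempotent components are invertible complex submatrices of $1_A$ and $2_A$, and conclude via the minor characterisation of complex rank. The only difference is presentational — the paper phrases it as a proof by contradiction (assuming $\rho(A)>\min(\rho(1_A),\rho(2_A))$), whereas you argue directly, which is slightly cleaner and also makes explicit the trivial case $\rho(A)=0$ that the paper glosses over.
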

\begin{proof}
Let $A$ be a bicomplex matrix of order $n\times m$ and $\rho(1_A)=r_1$, $\rho(2_A)=r_2$. Let us consider $\rho(A)=l$ and  $l>min(r_1,r_2)$. Then, there exists a sub-matrix $B$ of $A$ of order $l\times l$ such that $\rho(B)=l$. This implies that $\det(B)\notin O_2$.
\begin{eqnarray*}
&\Rightarrow& \det(1_B)\neq 0 \ \mbox{and} \ \det(2_B)\neq 0\ \{\mbox{by Corollary\ref{th9}}\}\\
&\Rightarrow& \rho(1_B) = l \ \mbox{and} \ \rho(2_B) = l\\
&\Rightarrow& \rho(1_B) > r_1 \ \mbox{or} \ \rho(2_B) > r_2 \quad \{\mbox{as} \ l>min(r_1,r_2)\}\\
&\Rightarrow& \rho(1_B) > \rho(1_A) \ \mbox{or} \ \rho(2_B) > \rho(2_A),
\end{eqnarray*}
which is a contradiction, because $(1_B)$ and $(2_B)$ are the sub-matrices of $(1_A)$ and $(2_A)$ respectively. Therefore, our assumption  $\rho(A)=l$, and  $l>min(r_1,r_2)$ is not true.
Thus, $\rho(A) \leq min(r_1,r_2)$. Hence, $\rho(A) \leq  min(\rho(1_A),\rho(2_A))$.
\end{proof}

The following remark is immediate consequence of Theorem \ref{th18}.

\begin{remark} If $A$ is a bicomplex matrix of order $n\times m$, then
\begin{enumerate}
\item 
$\rho(A)\leq \rho(1_A)$ and $\rho(A)\leq \rho(2_A)$. 
\item
$\rho(A)\leq$ $\rho_r(1_A)$ and $\rho(A)\leq$ $\rho_r(2_A)$.
\item 
$\rho(A)\leq$ $\rho_c(1_A)$ and $\rho(A)\leq$ $\rho_c(2_A)$.
\end{enumerate}
\end{remark}

\begin{theorem}\label{th21}
A matrix $A\in \C_2^{n\times n}$ is non-singular if and only if $\rho(1_A) = n$  and $\rho(2_A) = n$. 
\end{theorem}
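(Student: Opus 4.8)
The plan is to reduce the statement to Corollary \ref{th9}, which already characterizes non-singularity of $A\in\C_2^{n\times n}$ by the pair of conditions $\det(1_A)\neq 0$ and $\det(2_A)\neq 0$. Thus it suffices to prove, for a complex $n\times n$ matrix $M$ (applied afterwards to $M=1_A$ and $M=2_A$), that $\det(M)\neq 0$ exactly when $\rho(M)=n$.

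First I would observe that since $\C_1\subseteq\C_2$, the complex matrix $M$ is in particular a bicomplex matrix; writing each complex entry $z$ in idempotent form as $z e_1+z e_2$ gives $1_M=2_M=M$, so by Theorem \ref{th7} the bicomplex determinant of $M$ coincides with its ordinary complex determinant, and likewise the bicomplex determinant of any $k\times k$ submatrix of $M$ is just its ordinary determinant. Consequently a submatrix of $M$ is non-singular (i.e., its determinant lies outside $O_2$) precisely when its ordinary complex determinant is non-zero. This identifies the notion of ``non-singular submatrix'' used in Definition \ref{th17} with the classical one whenever the ambient matrix is complex.

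For the forward direction, suppose $\det(M)\neq 0$. Then $M$ is itself a non-singular $n\times n$ submatrix of $M$, and I would exhibit a descending chain $B_1\preccurlyeq B_2\preccurlyeq\cdots\preccurlyeq B_n=M$ of non-singular submatrices using the standard linear-algebra fact that deleting any single row from an invertible $k\times k$ complex matrix leaves a matrix of rank $k-1$, which therefore contains an invertible $(k-1)\times(k-1)$ submatrix; iterating this down to a single non-zero entry produces the required chain. Since $M$ has no submatrix of order greater than $n$, Definition \ref{th17} yields $\rho(M)=n$. Conversely, if $\rho(M)=n$, then by Definition \ref{th17} there is a non-singular submatrix of $M$ of order $n$; the only submatrix of $M$ of order $n$ is $M$ itself, hence $\det(M)\neq 0$.

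Putting these together: $A$ is non-singular if and only if $\det(1_A)\neq 0$ and $\det(2_A)\neq 0$ (Corollary \ref{th9}), which by the equivalence just established holds if and only if $\rho(1_A)=n$ and $\rho(2_A)=n$. The only genuinely substantive point is the descending-chain construction, i.e., verifying that the rank of Definition \ref{th17} agrees with the ordinary rank on complex matrices; the rest is a direct translation through Corollary \ref{th9} and Theorem \ref{th7}.
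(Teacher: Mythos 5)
Your proposal is correct and follows essentially the same route as the paper: both directions reduce to Corollary \ref{th9} together with the classical equivalence $\det(M)\neq 0 \iff \rho(M)=n$ for complex $n\times n$ matrices. The only difference is that you explicitly verify (via the descending chain of non-singular submatrices) that the rank of Definition \ref{th17} agrees with the ordinary rank when the matrix is complex, a point the paper's proof simply takes for granted when it passes from $\det(1_A)\neq 0$ to $\rho(1_A)=n$.
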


\begin{proof}
Let $A\in \C_2^{n\times n}$ be a non-singular matrix. Then, by using Corollary \ref{th9} we have
\begin{eqnarray*}
\det(1_A) \ne 0 \ \mbox{and} \ \det(2_A)\neq 0
\Rightarrow \rho(1_A) = n \ \mbox{and} \ \rho(2_A) = n.    
\end{eqnarray*}

\noindent{\bf Conversely} Let $\rho(1_A)=n$ and $\rho(2_A)=n$, where $1_A,\ 2_A\in \C_1^{n\times n}(\C_1)$. Then,
\begin{eqnarray*}
\det(1_A)\neq0 \ \mbox{and} \ \det(2_A)\neq0.
\end{eqnarray*}
Using Corollary \ref{th9}, we get $A$ is non singular matrix. The proof of theorem is completed.
\end{proof}

The following corollary is immediate consequence of Theorem \ref{th21}.

\begin{corollary}\label{th22}    
If $A\in \C_2^{n\times m}$,  and  $\rho(A)=n($respectively $m)$, then $\rho(1_A)=n($respectively $m)$ and  $\rho(2_A) =n($respectively $m)$.    
\end{corollary}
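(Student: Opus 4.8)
The plan is to reduce to the square case and invoke Theorem \ref{th21}. Suppose first that $\rho(A)=n$. By Definition \ref{th17} and the remark immediately following it, there is a non-singular sub-matrix $B$ of $A$ of order $n\times n$ with $\rho(B)=n$; in particular $n\le m$, so that $\min(m,n)=n$. Applying Theorem \ref{th21} to the square matrix $B$ then gives $\rho(1_B)=n$ and $\rho(2_B)=n$.

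Next I would transfer this information from $B$ back to $A$. By Theorem \ref{th15}, $1_B$ and $2_B$ are sub-matrices of $1_A$ and $2_A$ respectively, and since the (ordinary complex) rank of a matrix never decreases when it is enlarged to a matrix containing it as a sub-matrix, we obtain $\rho(1_A)\ge\rho(1_B)=n$ and $\rho(2_A)\ge\rho(2_B)=n$. On the other hand $1_A,2_A\in\C_1^{n\times m}$, hence $\rho(1_A)\le\min(m,n)=n$ and $\rho(2_A)\le\min(m,n)=n$. Combining the two inequalities yields $\rho(1_A)=n=\rho(2_A)$, as required.

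For the parenthetical statement $\rho(A)=m$, the argument is the same after interchanging the roles of $n$ and $m$: a non-singular $m\times m$ sub-matrix $B$ of $A$ forces $m\le n$, Theorem \ref{th21} gives $\rho(1_B)=\rho(2_B)=m$, Theorem \ref{th15} propagates this to $\rho(1_A),\rho(2_A)\ge m$, and the universal bound $\rho(\,\cdot\,)\le\min(m,n)=m$ closes the gap. One could also bypass Theorem \ref{th21} altogether and simply quote Theorem \ref{th18} together with the remark after it, which already records $\rho(A)\le\rho(1_A)$ and $\rho(A)\le\rho(2_A)$.

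I do not expect a genuine obstacle here; the only points that need a moment's care are the bookkeeping observations that $\rho(A)=n$ (resp. $m$) forces $n\le m$ (resp. $m\le n$), so that $\min(m,n)$ is indeed the sharp upper bound being used, and that the complex rank is monotone under the sub-matrix relation — both of which are standard facts about complex matrices.
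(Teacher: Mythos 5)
Your argument is correct and matches the paper's intent: the paper states this corollary as an immediate consequence of Theorem \ref{th21}, and your proof simply fills in the routine details (extract the non-singular $n\times n$ sub-matrix $B$ guaranteed by Definition \ref{th17}, apply Theorem \ref{th21} to $B$, and propagate to $1_A$, $2_A$ via Theorem \ref{th15} and monotonicity of complex rank). Your closing observation that one could instead quote $\rho(A)\le\rho(1_A)$ and $\rho(A)\le\rho(2_A)$ from the remark after Theorem \ref{th18} is also valid and even shorter.
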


\begin{remark}
The converse of the Corollary \ref{th22} is not true. For instance, if
\begin{align*} 
A =\begin{bmatrix}
1 & 0 & 0\\
0 & 1 & 6
\end{bmatrix} e_1 + \begin{bmatrix}
0 & 1& 0\\
1 & 0 & 6
\end{bmatrix} e_2
=\begin{bmatrix}
e_1 & e_2 & 0\\
e_2 & e_1 & 6
\end{bmatrix},
\end{align*}
then $\rho(1_A)=\rho(2_A)=2$, but $\rho(A)=1$.
\end{remark}

\begin{remark}
If $A\in \C_2^{n\times m}$, then in general $\rho_r(A) \neq$ $\rho_c(A)$. 
\end{remark}

\begin{example}\label{th23}
Consider \[
 A=\begin{bmatrix}
2 & e_1 & 1\\
0 & 0 & e_1
\end{bmatrix}.
\]
It is evident that columns of A and rows of A are linearly independent with complex scalars. Thus, 
$\rho_r(A) \neq$ $\rho_c(A)$.
\end{example}

\begin{remark}
Example \ref{th23} shows that a bicomplex matrix $A\in \C_2^{n\times m}$ does not exhibit analogous notion as observed in a complex matrix, i.e. all columns of a bicomplex matrix $A$ are linearly independent or all rows of a bicomplex matrix $A$ are linearly independent  does not imply $\rho(A)=m$ or $n$.     
\end{remark}

\begin{remark}
If $A\in\C_2^{n\times n}$, and $\det(A)\in O_2$, then rows of A and columns of A may be linearly independent.
\end{remark}

\begin{example}\label{th24}
Consider \[
 A=\begin{bmatrix}
1 & 2\\
e_1 & 0
\end{bmatrix}.
\]
The rows and columns of matrix A are linearly independent, i.e. $\rho_r(A) =\rho_c(A)=2$, but $\det(A)\in O_2$.
\end{example}

\begin{remark}
If $A\in \C_2^{n\times m}$, then in general
$\rho_r(A) \neq \rho_c(A) \neq \rho(A)$. 
\end{remark}
\begin{example}\label{th25}
Consider \[
A=\begin{bmatrix}
1 & 0 & 1\\
e_2 & e_1 & 0\\
0 & e_2 & e_1\\
0 & 0 & e_1
\end{bmatrix}.
\]
Then, we have $\rho_r(A)=4$, $\rho_c(A)=3$ and $\rho(A)=1$, i.e. $\rho_r(A)\neq$ $\rho_c(A)\neq \rho(A)$.
\end{example}
\begin{definition}\label{th26}
({\bf Idempotent row rank of a matrix}) If $A = [\xi_{ij}]_{n\times m}\in \C_2^{n\times m}$,
then we define idempotent row space of $A$ as
\begin{eqnarray*}
\mbox{Idempotent row space of}\ A &=& (\mbox{row space of}\ 1_A) e_1  + (\mbox{row space of}\ 2_A) e_2\\
&=& \left\{  \left(\sum_{k=1}^{n}\alpha_k({1_\xi}_{k1},{1_\xi}_{k2},\ldots,{1_\xi}_{km})\right)e_1 \right. \\
  &&\left.   +\left(\sum_{k=1}^{n}\beta_k({2_\xi}_{k1},{2_\xi}_{k2},\ldots,{2_\xi}_{km})\right)e_2;\ \alpha_k,\beta_k\in \C_1, 1\le k\le n \right\}.
\end{eqnarray*}
The Idempotent row rank of $A$ is denoted by "$\rho_{ir}(A)$" and defined as the dimension of idempotent row space of $A$.
\end{definition}

\begin{definition}({\bf Idempotent column rank of a matrix})\label{th27}
If $A = [\xi_{ij}]_{n\times m}\in \C_2^{n\times m}$, then we define idempotent column space of $A$ as 
\begin{eqnarray*}
\mbox{Idempotent column space of}\ A&=&(\mbox{column space of}\ 1_A) e_1  + (\mbox{column space of}\ 2_A) e_2\\
&=& \left\{  \left(\sum_{k=1}^{m}\alpha_k({1_\xi}_{1k},{1_\xi}_{2k},\ldots{1_\xi}_{nk})\right)e_1 \right. \\
  &&\left.   +\left(\sum_{k=1}^{m}\beta_k({2_\xi}_{1k},{2_\xi}_{2k},\ldots,{2_\xi}_{nk})\right)e_2;\ \alpha_k,\beta_k\in \C_1, 1\le k\le m \right\}.
\end{eqnarray*}
The Idempotent column rank of $A$ is denoted by "$\rho_{ic}(A)$" and defined as the dimension of idempotent column space of $A$.
\end{definition}

\begin{theorem}\label{th28}
If $A=[\xi_{ij}]_{n\times m}\in \C_2^{n\times m}$, then row space of \ $A \subseteq$ Idempotent row space of \ $A$.
\end{theorem}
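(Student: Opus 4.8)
The plan is to take an arbitrary vector in the row space of $A$, rewrite it explicitly through the idempotent decomposition of the rows of $A$, and then recognise the result as a member of the idempotent row space. First I would recall from Definition \ref{th19} that a generic element $v$ of the row space of $A$ is a $\C_1$-linear combination $v=\sum_{k=1}^{n}\gamma_k R_k$ with $\gamma_k\in\C_1$, where $R_k=(\xi_{k1},\xi_{k2},\ldots,\xi_{km})$ is the $k$-th row of $A$.

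Next I would expand each row in its idempotent form. Viewing $R_k$ as an element of $\C_2^m$ and applying Definition \ref{th5}, we have $R_k=(1_{\xi_{k1}},\ldots,1_{\xi_{km}})e_1+(2_{\xi_{k1}},\ldots,2_{\xi_{km}})e_2$. The one step requiring care is to push the scalar $\gamma_k$ through this decomposition: since $\gamma_k\in\C_1\subseteq\C_2$ has idempotent representation $\gamma_k=\gamma_k e_1+\gamma_k e_2$ and, by Remark \ref{th49}, multiplication in $\C_2^m$ is componentwise in the idempotent basis, we obtain $\gamma_k R_k=\bigl(\gamma_k(1_{\xi_{k1}},\ldots,1_{\xi_{km}})\bigr)e_1+\bigl(\gamma_k(2_{\xi_{k1}},\ldots,2_{\xi_{km}})\bigr)e_2$. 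Summing over $k$ and using $e_1^2=e_1$, $e_2^2=e_2$, $e_1e_2=0$ to keep the two idempotent components separate gives
\[
v=\left(\sum_{k=1}^{n}\gamma_k(1_{\xi_{k1}},\ldots,1_{\xi_{km}})\right)e_1+\left(\sum_{k=1}^{n}\gamma_k(2_{\xi_{k1}},\ldots,2_{\xi_{km}})\right)e_2 .
\]

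Finally I would compare this with the description of the idempotent row space in Definition \ref{th26}: the displayed expression is precisely the generic element of that space under the choice $\alpha_k=\beta_k=\gamma_k$ for every $k$. Hence $v$ belongs to the idempotent row space of $A$, and since $v$ was arbitrary, the row space of $A$ is contained in the idempotent row space of $A$. I do not anticipate a genuine obstacle here; the only delicate point is the bookkeeping that a single $\C_1$-scalar acts in the same way on both idempotent components, which is exactly what forces $\alpha_k=\beta_k$ in the representation above and, incidentally, explains why the inclusion is in general strict (the idempotent row space permits independent coefficients $\alpha_k\neq\beta_k$).
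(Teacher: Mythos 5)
Your argument is correct and follows essentially the same route as the paper's own proof: expand each row of $A$ in its idempotent form via Definition \ref{th5} and Remark \ref{th49}, push the $\C_1$-scalars through componentwise, and regroup to exhibit the vector as an element of the idempotent row space with $\alpha_k=\beta_k$. Your closing observation about why the inclusion is generally strict is a useful extra remark but not needed for the proof itself.
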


\begin{proof}
Let $A=[\xi_{ij}]_{n\times m}\in \C_2^{n\times m}$ and $X \in$ row space of $A$. Then, 
$X = \sum_{k=1}^{n}\alpha_{k}(\xi_{k1},\xi_{k2},\ldots,\xi_{km})$, where $\alpha_k \in \C_1$. Now, by using Definition \ref{th5} and Remark \ref{th49}, we have 
\begin{eqnarray*}
X &=& \sum_{k=1}^{n}\alpha_{k}\{({1_\xi}_{k1},{1_\xi}_{k2},\ldots,{1_\xi}_{km}) e_1 + ({2_\xi}_{k1},{2_\xi}_{k2},\ldots,{2_\xi}_{km}) e_2\}\\
&=& \sum_{k=1}^{n}\alpha_{k}\{({1_\xi}_{k1}e_1,{1_\xi}_{k2}e_1,\ldots,{1_\xi}_{km}e_1) +({2_\xi}_{k1}e_2,{2_\xi}_{k2}e_2,\ldots,{2_\xi}_{km}e_2)\}\\
&=& \sum_{k=1}^{n} \alpha_{k}({1_\xi}_{k1} e_1,{1_\xi}_{k2} e_1,\ldots,{1_\xi}_{km} e_1) + \sum_{k=1}^{n}\alpha_{k}({2_\xi}_{k1} e_2,{2_\xi}_{k2} e_2,\ldots,{2_\xi}_{km} e_2)\\
&=& \sum_{k=1}^{n}(\alpha_{k} ({1_\xi}_{k1} e_1),\alpha_{k}({1_\xi}_{k2} e_1),\ldots,\alpha_{k}({1_\xi}_{km} e_1)) \\
&&\hspace{3.5 cm}+ \sum_{k=1}^{n}(\alpha_{k}({2_\xi}_{k1} e_2),\alpha_{k}({2_\xi}_{k2} e_2),\ldots,\alpha_{k}({2_\xi}_{km} e_2))\\
&=& \sum_{k=1}^{n}((\alpha_{k} {1_\xi}_{k1}) e_1,(\alpha_{k}{1_\xi}_{k2}) e_1,\ldots,(\alpha_{k}{1_\xi}_{km} )e_1)\\
&&\hspace{3.5 cm}+\sum_{k=1}^{n}((\alpha_{k}{2_\xi}_{k1}) e_2,(\alpha_{k}{2_\xi}_{k2}) e_2,\ldots,(\alpha_{k}{2_\xi}_{km}) e_2)\\ 
&=& \sum_{k=1}^{n}(\alpha_{k} {1_\xi}_{k1} ,\alpha_{k}{1_\xi}_{k2},\ldots,\alpha_{k}{1_\xi}_{km} )e_1 + \sum_{k=1}^{n}(\alpha_{k}{2_\xi}_{k1},\alpha_{k}{2_\xi}_{k2},\ldots,\alpha_{k}{2_\xi}_{km})e_2\\
&=& \sum_{k=1}^{n}\{\alpha_{k} ( {1_\xi}_{k1} ,{1_\xi}_{k2},\ldots,{1_\xi}_{km} )\}e_1+ \sum_{k=1}^{n}\{\alpha_{k}({2_\xi}_{k1},{2_\xi}_{k2},\ldots,{2_\xi}_{km})\}e_2 \\
&=& \left \{\sum_{k=1}^{n}\alpha_{k} ( {1_\xi}_{k1} ,{1_\xi}_{k2},\ldots,{1_\xi}_{km} )\right\}e_1+\left \{\sum_{k=1}^{n}\alpha_{k}({2_\xi}_{k1},{2_\xi}_{k2},\ldots,{2_\xi}_{km})\right \}e_2.
\end{eqnarray*}
Therefore, $X \in$ (row space of $1_A )  e_1  +$ (row space of $2_A )   e_2$. Hence, row space of $A  
\subseteq$ idempotent row space of $A$.
\end{proof}
\noindent{Dually, we may prove the following theorem.}
\begin{theorem} \label{th29}
If $A=[\xi_{ij}]_{n\times m}\in \C_2^{n\times m}$, then 
column  space  of  $A  \subseteq$ idempotent column  space  of $A$.
\end{theorem}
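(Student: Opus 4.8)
The plan is to mirror the proof of Theorem \ref{th28}, interchanging the roles of rows and columns throughout. First I would fix an arbitrary vector $Y$ in the column space of $A$, so that $Y = \sum_{k=1}^{m} \alpha_k (\xi_{1k}, \xi_{2k}, \ldots, \xi_{nk})$ for some scalars $\alpha_k \in \C_1$, where $(\xi_{1k}, \ldots, \xi_{nk})$ denotes the $k$-th column of $A$ viewed as an element of $\C_2^n$. The target is to produce $Y$ in the form (column space of $1_A$)$e_1$ + (column space of $2_A$)$e_2$, which by Definition \ref{th27} is precisely the idempotent column space of $A$.

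The computation then proceeds by substituting the idempotent decomposition $\xi_{jk} = {1_\xi}_{jk} e_1 + {2_\xi}_{jk} e_2$ of each entry into the expression for $Y$ and pushing the idempotents $e_1, e_2$ through the tuple operations and the $\C_1$-scalar multiplications. The two ingredients that make this work are exactly those used for the row case and are recorded in Remark \ref{th49}: scalar multiplication by $\kappa \in \C_1$ acts componentwise on an $n$-tuple, and $(\xi_1, \ldots, \xi_n) e_i = (\xi_1 e_i, \ldots, \xi_n e_i)$ for $i = 1, 2$. Applying these, one collapses
\begin{eqnarray*}
Y &=& \sum_{k=1}^{m} \alpha_k \left\{ ({1_\xi}_{1k}, \ldots, {1_\xi}_{nk}) e_1 + ({2_\xi}_{1k}, \ldots, {2_\xi}_{nk}) e_2 \right\} \\
&=& \left\{ \sum_{k=1}^{m} \alpha_k ({1_\xi}_{1k}, \ldots, {1_\xi}_{nk}) \right\} e_1 + \left\{ \sum_{k=1}^{m} \alpha_k ({2_\xi}_{1k}, \ldots, {2_\xi}_{nk}) \right\} e_2 ,
\end{eqnarray*}
at which point the first summand lies in (column space of $1_A$)$e_1$ and the second in (column space of $2_A$)$e_2$, so that $Y$ belongs to the idempotent column space of $A$.

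I do not expect any genuine obstacle: the statement is the transpose of Theorem \ref{th28} and the same algebraic identities apply word for word. The only point demanding a little care is index bookkeeping — in the column computation the first subscript of ${1_\xi}_{jk}$ ranges over the rows $1, \ldots, n$ while the second stays fixed at the column label $k$, the reverse of the row-space argument — but this is purely cosmetic. A shorter alternative would be to invoke Theorem \ref{th28} applied to the transpose of $A$, using that transposition commutes with the idempotent decomposition (so $1_{A^{T}} = (1_A)^{T}$ and $2_{A^{T}} = (2_A)^{T}$) and interchanges row and column spaces; I would mention this but carry out the direct computation above so that the argument remains self-contained.
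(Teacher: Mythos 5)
Your proposal is correct and matches the paper exactly: the paper gives no separate argument for Theorem \ref{th29}, stating only that it follows ``dually'' from Theorem \ref{th28}, which is precisely the row-to-column transposition you carry out, using the same identities from Remark \ref{th49}. Nothing further is needed.
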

\begin{theorem} \label{th30}
Let $A=[\xi_{ij}] \in \C_2^{n\times m}$ and either rows of $1_A$  or rows of $2_A$ are linearly independent. Then, $\rho_r(A)$ = n, i.e. rows of A are linearly independent. 
\end{theorem}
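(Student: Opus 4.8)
The plan is to prove directly that the $n$ rows of $A$ are linearly independent over $\C_1$; since $\rho_r(A)$ is by definition the dimension of the subspace of $\C_2^m(\C_1)$ spanned by those rows, this immediately yields $\rho_r(A)=n$. Write $R_k=(\xi_{k1},\dots,\xi_{km})$ for the $k$-th row of $A$, and let $u_k$ and $v_k$ denote the $k$-th rows of $1_A$ and $2_A$ respectively, so that $R_k=u_ke_1+v_ke_2$ is the idempotent representation of $R_k$ as an element of $\C_2^m$, in the sense of Definition \ref{th5}.

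Suppose $\sum_{k=1}^{n}\alpha_kR_k=0$ with $\alpha_k\in\C_1$. Using $\alpha_k=\alpha_ke_1+\alpha_ke_2$ together with the componentwise rules for scalar multiplication and addition in $\C_2^m$ recorded in Remark \ref{th49}, I would rewrite the left-hand side as $\left(\sum_{k=1}^{n}\alpha_ku_k\right)e_1+\left(\sum_{k=1}^{n}\alpha_kv_k\right)e_2$; this is exactly the kind of manipulation already carried out in the proof of Theorem \ref{th28}. By the uniqueness of the idempotent representation of an element of $\C_2^m$ (the second bullet of Remark \ref{th49}), the vanishing of this expression forces $\sum_{k=1}^{n}\alpha_ku_k=0$ and $\sum_{k=1}^{n}\alpha_kv_k=0$ in $\C_1^m$ simultaneously.

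Now I would invoke the hypothesis. If the rows $u_1,\dots,u_n$ of $1_A$ are linearly independent over $\C_1$, the first relation gives $\alpha_1=\dots=\alpha_n=0$; if instead the rows $v_1,\dots,v_n$ of $2_A$ are linearly independent, the second relation yields the same conclusion. In either case every $\alpha_k$ vanishes, so $R_1,\dots,R_n$ are linearly independent, whence $\rho_r(A)=n$. (Note that the hypothesis is only consistent when $n\le m$, so this is compatible with the general bound $\rho_r(A)\le 2m$ coming from $\dim_{\C_1}\C_2^m(\C_1)=2m$.)

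There is no essential obstacle here. The only step requiring care is the bookkeeping in the second paragraph: keeping each scalar $\alpha_k\in\C_1$ on the correct side and distributing it properly over the two idempotent components, and then correctly applying uniqueness of the idempotent decomposition. That uniqueness is precisely the device that converts a single bicomplex linear relation among the $R_k$ into a pair of complex linear relations among the $u_k$ and among the $v_k$, and it is what lets the independence of just one of the two families suffice.
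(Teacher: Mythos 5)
Your proposal is correct and follows essentially the same route as the paper: assume a vanishing $\C_1$-linear combination of the rows of $A$, split it via the uniqueness of the idempotent representation into two complex relations among the rows of $1_A$ and of $2_A$, and use the assumed independence of either family to force all coefficients to vanish. No further comment is needed.
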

\begin{proof}
Let $A=[\xi_{ij}]_{n\times m}\in \C_2^{n\times m}$ and rows of $1_A$ be linearly independent.\\ 
Suppose
$\sum_{k=1}^{n}\alpha_k (\xi_{k1},\xi_{k2},\dots,\xi_{km})=0$, where $\alpha_k \in \C_1$. Then, by Definition \ref{th5} and Remark \ref{th49}, it follows that
\begin{eqnarray*}
&& \sum_{k=1}^{n} \alpha_k \{({1_\xi}_{k1},{1_\xi}_{k2},\dots,{1_\xi}_{km}) e_1 + ({2_\xi}_{k1},{2_\xi}_{k2},\dots,{2_\xi}_{km}) e_2 \} = (0,0,\dots,0)\\
&\Rightarrow& \sum_{k=1}^{n} \alpha_k \{({1_\xi}_{k1}e_1, {1_\xi}_{k2}e_1, \dots,{1_\xi}_{km}e_1) + ({2_\xi}_{k1}e_2, {2_\xi}_{k2}e_2, \dots,{2_\xi}_{km}e_2)\}= (0,0,\dots,0) \\
&\Rightarrow&\sum_{k=1}^{n} \alpha_k({1_\xi}_{k1}e_1, {1_\xi}_{k2}e_1, \dots,{1_\xi}_{km}e_1) + \sum_{k=1}^{n} \alpha_k ({2_\xi}_{k1}e_2, {2_\xi}_{k2}e_2, \dots,{2_\xi}_{km}e_2)= (0,0,\dots,0)\\
&\Rightarrow&\sum_{k=1}^{n} (\alpha_k({1_\xi}_{k1}e_1), \alpha_k({1_\xi}_{k2}e_1),\dots, \alpha_k({1_\xi}_{km}e_1)) \\
&&\hspace{3.0 cm} +\sum_{k=1}^{n} (\alpha_k ({2_\xi}_{k1}e_2), \alpha_k({2_\xi}_{k2}e_2),\dots, \alpha_k({2_\xi}_{km}e_2)) = (0,0,\dots,0)\\
&\Rightarrow&\sum_{k=1}^{n} ((\alpha_k{1_\xi}_{k1})e_1, (\alpha_k{1_\xi}_{k2})e_1,\dots, (\alpha_k{1_\xi}_{km})e_1)
\\ 
&&\hspace{3.0 cm}+\sum_{k=1}^{n} ((\alpha_k {2_\xi}_{k1})e_2, (\alpha_k{2_\xi}_{k2})e_2,\dots, (\alpha_k{2_\xi}_{km})e_2)= (0,0,\dots,0)\\
&\Rightarrow&\sum_{k=1}^{n} (\alpha_k{1_\xi}_{k1}, \alpha_k{1_\xi}_{k2},\dots, \alpha_k{1_\xi}_{km})e_1\\ &&\hspace{3.0 cm}+\sum_{k=1}^{n} (\alpha_k {2_\xi}_{k1}, \alpha_k{2_\xi}_{k2},\dots, \alpha_k{2_\xi}_{km})e_2 = (0,0,\dots,0)\\
&\Rightarrow& \left\{\sum_{k=1}^{n} (\alpha_k{1_\xi}_{k1}, \alpha_k{1_\xi}_{k2},\dots, \alpha_k{1_\xi}_{km})\right\}e_1\\
&&\hspace{3.0 cm}+\left\{\sum_{k=1}^{n} (\alpha_k {2_\xi}_{k1}, \alpha_k{2_\xi}_{k2},\dots, \alpha_k{2_\xi}_{km})\right\}e_2 = (0,0,\dots,0).
\end{eqnarray*}
Implies that,
\[
\sum_{k=1}^{n} \alpha_k ({1_\xi}_{k1},{1_\xi}_{k2},\dots,{1_\xi}_{km}) = (0,0,\dots,0)\quad and \quad \sum_{k=1}^{n} \alpha_k ({2_\xi}_{k1},{2_\xi}_{k2},\dots,{2_\xi}_{km}) =(0,0,\dots,0).
\]
Since rows of $1_A$ are linearly independent,
therefore $\alpha_{k} =0\ \forall\ k; 1\le k\le n$. 
So, it is clear that rows of $A$ are linearly independent.
Similarly, if rows of $2_A$ are linearly independent.
It gives rows of $A$ are linearly independent. Hence $\rho_r(A) = n$.
\end{proof}

\begin{remark} 
Some remarks can be easily made here.
\begin{enumerate}
    \item 
The converse of Theorem \ref{th30} is not true. 
For example, consider 
\[
 A=\begin{bmatrix}
0 & 0 & e_1\\
0 & 0 & e_2
\end{bmatrix}.
\]
Then, we have $\rho_r(A)$ = 2. But, rows of $1_A$ are linearly dependent and $2_A$ are linearly dependent.
\item 
If $A\in \C_2^{n\times m}$ and $\rho(A) = n$, then Theorem \ref{th30} and Corollary \ref{th22} implies that
$\rho_r(A)$ = $\rho(A)$ = $\rho_r(1_A)$ = $\rho_r(2_A) = n$. 
\end{enumerate}
\end{remark}

The following Corollary \ref{th31} is immediate consequence of Theorems \ref{th21} and \ref{th30}.

\begin{corollary}\label{th31}
Let $A=[\xi_{ij}]_{n\times n} \in \C_2^{n\times n}$ and $\det(A)\notin O_2$, then $\rho_r(1_A)$ = $\rho_r(2_A)$ = $\rho_r(A)=n$.
\end{corollary}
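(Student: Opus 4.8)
The plan is to chain Theorems \ref{th21} and \ref{th30} together with the classical fact that, for an ordinary complex square matrix, having full (minor) rank is equivalent to having linearly independent rows. First I would note that the hypothesis $\det(A)\notin O_2$ says precisely that $A$ is a non-singular bicomplex matrix, so Theorem \ref{th21} applies and yields $\rho(1_A)=n$ and $\rho(2_A)=n$, where $1_A,2_A\in\C_1^{n\times n}$ are the idempotent components of $A$.

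Next I would invoke ordinary complex linear algebra: an $n\times n$ complex matrix has rank $n$ exactly when its $n$ rows are linearly independent over $\C_1$, equivalently when its row rank equals $n$. Hence from $\rho(1_A)=n$ and $\rho(2_A)=n$ we obtain $\rho_r(1_A)=n$ and $\rho_r(2_A)=n$, and in particular the rows of $1_A$ (and likewise the rows of $2_A$) form a linearly independent family.

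Finally, since the rows of $1_A$ are linearly independent, Theorem \ref{th30} immediately gives $\rho_r(A)=n$. Putting the three equalities together yields $\rho_r(1_A)=\rho_r(2_A)=\rho_r(A)=n$, which is the assertion of the corollary.

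I do not expect any genuine obstacle here: each step is either a restatement of the hypothesis via the definition of a non-singular bicomplex matrix, a direct citation of an already established theorem, or a standard fact about matrices over a field. The only point meriting a line of care is the transition from the minor-based rank $\rho$ of Definition \ref{th17} to the row rank $\rho_r$ for \emph{ordinary} complex matrices; but since these two notions of rank coincide over a field, this causes no difficulty.
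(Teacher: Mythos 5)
Your proposal is correct and follows exactly the route the paper intends: the paper presents Corollary \ref{th31} as an immediate consequence of Theorems \ref{th21} and \ref{th30}, which is precisely the chain you describe (non-singularity gives $\rho(1_A)=\rho(2_A)=n$ via Theorem \ref{th21}, the standard rank/row-rank equivalence over $\C_1$ gives linear independence of the rows of $1_A$ and $2_A$, and Theorem \ref{th30} then yields $\rho_r(A)=n$). Your explicit remark about bridging the minor-based rank and the row rank for ordinary complex matrices is a useful clarification the paper leaves implicit.
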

Dually we may prove the following result.
\begin{theorem}\label{th32}
If $A\in \C_2^{n\times m}$ and either columns of $1_A$ or columns of $2_A$ are linearly independent then $\rho_c(A)$ = m.
\end{theorem}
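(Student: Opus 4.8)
The plan is to mimic, \emph{mutatis mutandis}, the proof of Theorem \ref{th30}, interchanging the roles of rows and columns; the column analogue of every step there goes through verbatim.

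First I would take an arbitrary $\C_1$-linear dependence relation among the columns of $A=[\xi_{ij}]_{n\times m}$, say
\[
\sum_{k=1}^{m}\alpha_k(\xi_{1k},\xi_{2k},\dots,\xi_{nk})=(0,0,\dots,0),\qquad \alpha_k\in\C_1,
\]
the goal being to show that every $\alpha_k$ vanishes. Using the idempotent representation $\xi_{jk}={1_\xi}_{jk}e_1+{2_\xi}_{jk}e_2$ together with Definition \ref{th5} and Remark \ref{th49} (which tell us how a $\C_1$-scalar interacts with the idempotent components of a tuple in $\C_2^n$), I would rewrite this relation, exactly as in the chain of implications in the proof of Theorem \ref{th30}, as
\[
\left\{\sum_{k=1}^{m}\alpha_k({1_\xi}_{1k},{1_\xi}_{2k},\dots,{1_\xi}_{nk})\right\}e_1+\left\{\sum_{k=1}^{m}\alpha_k({2_\xi}_{1k},{2_\xi}_{2k},\dots,{2_\xi}_{nk})\right\}e_2=(0,0,\dots,0).
\]
By the uniqueness of the idempotent representation of an element of $\C_2^n$ (the second bullet of Remark \ref{th49}), this single equation is equivalent to the pair of complex equations
\[
\sum_{k=1}^{m}\alpha_k({1_\xi}_{1k},\dots,{1_\xi}_{nk})=(0,\dots,0)\quad\text{and}\quad\sum_{k=1}^{m}\alpha_k({2_\xi}_{1k},\dots,{2_\xi}_{nk})=(0,\dots,0),
\]
that is, $\C_1$-linear dependence relations \emph{with the same coefficients} $\alpha_k$ among the columns of $1_A$ and among the columns of $2_A$, respectively.

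Finally, if the columns of $1_A$ are linearly independent the first equation forces $\alpha_k=0$ for all $k$; if instead the columns of $2_A$ are linearly independent the second equation does the same. In either case the columns of $A$ are $\C_1$-linearly independent, so the column space of $A$ (a subspace of $\C_2^m(\C_1)$, in the notation of Definition \ref{th20}) has dimension $m$, i.e.\ $\rho_c(A)=m$. The only point that is not purely formal is the splitting of the bicomplex relation into the two complex ones, and this is exactly where Remark \ref{th49} is used; since that machinery is already established, there is no genuine obstacle here, the statement being a straightforward dual of Theorem \ref{th30}.
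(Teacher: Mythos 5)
Your proof is correct and is exactly the argument the paper intends: the paper establishes Theorem \ref{th32} only via the remark ``Dually we may prove the following result,'' i.e.\ by dualizing the proof of Theorem \ref{th30}, which is precisely the row-to-column transposition you carry out, including the splitting of the bicomplex dependence relation into its two complex idempotent components via Remark \ref{th49}. The only slip is notational: by Definition \ref{th20} the column space of $A$ is a subspace of $\C_2^{n}(\C_1)$, not $\C_2^{m}(\C_1)$, since columns are $n$-tuples; this does not affect the argument.
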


\begin{remark}
The following remarks can be made here.
\begin{enumerate}
\item 
The converse of the Theorem \ref{th32} is not true. For example, consider
 
\[
 A=\begin{bmatrix}
e_1 & 0 \\
0 & e_2\\
\end{bmatrix}.
\]
It is trivial to find that $\rho_c(A)$ = 2, but columns of $1_A$ are linearly dependent and  columns of $2_A$ are linearly dependent.
\item 
If $A\in \C_2^{n\times m}$ and $\rho(A)=m$, then Theorem \ref{th32} and Corollary \ref{th22} imply that
$\rho_c(A)$ = $\rho(A)$ = $\rho_c(1_A)$ = $\rho_c(2_A) = m$.
\end{enumerate}
\end{remark}
The following corollary is immediate consequence of Theorems \ref{th21} and \ref{th32}.
\begin{corollary}\label{th33}
If $A=[\xi_{ij}]_{n\times n} \in \C_2^{n\times n}$ and  $\det(A) \notin O_2$, then $\rho_c(1_A)$ = $\rho_c(2_A)$ = $\rho_c(A) = n$.
\end{corollary}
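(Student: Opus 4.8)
\emph{Proof proposal.} The plan is to run the hypothesis through Theorem~\ref{th21} to pin down the ordinary ranks $\rho(1_A)$ and $\rho(2_A)$, translate these into linear independence of the columns of the complex matrices $1_A$ and $2_A$, and then feed the latter into Theorem~\ref{th32} to obtain the conclusion for $\rho_c(A)$.

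First I would note that $\det(A)\notin O_2$ is exactly the statement that $A$ is non-singular, so Theorem~\ref{th21} applies and gives $\rho(1_A)=n$ and $\rho(2_A)=n$; equivalently, by Corollary~\ref{th9}, $\det(1_A)\neq 0$ and $\det(2_A)\neq 0$. Since $1_A$ and $2_A$ are $n\times n$ \emph{complex} matrices of full rank $n$, classical linear algebra over $\C_1$ shows that the $n$ columns of $1_A$ are linearly independent, and likewise the $n$ columns of $2_A$ are linearly independent; hence $\rho_c(1_A)=n$ and $\rho_c(2_A)=n$ directly from Definition~\ref{th20}.

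Finally, because the columns of $1_A$ are linearly independent, Theorem~\ref{th32} (with $m=n$) yields $\rho_c(A)=n$. Combining the three computations gives $\rho_c(1_A)=\rho_c(2_A)=\rho_c(A)=n$, as claimed.

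I do not expect a genuine obstacle; the corollary is a bookkeeping assembly of Theorems~\ref{th21} and~\ref{th32}. The one mildly delicate point worth a sentence is that the submatrix-chain notion of rank in Definition~\ref{th17}, when specialised to a complex matrix, coincides with the usual rank: a non-singular $r\times r$ complex submatrix always contains a non-singular $(r-1)\times(r-1)$ submatrix (expand the determinant along a row, some cofactor is nonzero), so a chain of type~(\ref{eq1}) automatically exists, and thus $\rho(1_A)=n$ really does mean $1_A$ is invertible. One may alternatively sidestep this remark entirely by invoking Corollary~\ref{th9} together with the elementary fact that an invertible complex matrix has linearly independent columns.
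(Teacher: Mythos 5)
Your proposal is correct and follows exactly the route the paper intends: the paper gives no written proof, simply declaring the corollary an immediate consequence of Theorems~\ref{th21} and~\ref{th32}, which is precisely the assembly you carry out (Theorem~\ref{th21} for $\rho(1_A)=\rho(2_A)=n$, classical linear algebra over $\C_1$ for the column independence, then Theorem~\ref{th32} for $\rho_c(A)=n$). Your closing remark about reconciling the submatrix-chain rank of Definition~\ref{th17} with the usual complex rank is a sensible extra precaution, but it does not change the argument.
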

The following Corollaries \ref{th34} and \ref{th35} are immediate consequences of Theorems \ref{th30} and \ref{th32}.
\begin{corollary}\label{th34}
If $A=[\xi_{ij}]_{n \times n} \in \C_2^{n\times n}$ and either rows of $1_A$ (respectively $2_A$) or columns of $1_A$ (respectively $2_A$) are linearly independent, then $\rho_r(A)$ = $\rho_c(A) = n$.      
\end{corollary}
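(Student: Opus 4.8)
The plan is to reduce the statement to a single classical fact: an $n\times n$ matrix over a field has linearly independent rows if and only if it has linearly independent columns, and in that case the matrix is non-singular. Since $A$ is square, both idempotent components $1_A$ and $2_A$ lie in $\C_1^{n\times n}$, so this dichotomy applies to each of them individually, and it is precisely here that the square hypothesis $n=m$ is used — for a non-square component, independence of the rows does not force independence of the columns.

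First I would normalize the hypothesis. The assumption supplies one of four alternatives: the rows of $1_A$, the columns of $1_A$, the rows of $2_A$, or the columns of $2_A$ are linearly independent over $\C_1$. Suppose, without loss of generality, that one of the first two holds, so that the component in question is $1_A$ (the argument with $2_A$ in place of $1_A$ is word-for-word identical). Whether it is the rows or the columns of $1_A$ that are independent, the classical fact above gives that $1_A$ is a non-singular complex matrix, hence $\det(1_A)\neq 0$ and $\rho_r(1_A)=\rho_c(1_A)=n$. In particular I now have simultaneously: the rows of $1_A$ are linearly independent, and the columns of $1_A$ are linearly independent.

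With this in hand the conclusion follows by invoking the two theorems just proved. Applying Theorem \ref{th30} to the linearly independent rows of $1_A$ gives $\rho_r(A)=n$; applying Theorem \ref{th32} to the linearly independent columns of $1_A$ gives $\rho_c(A)=n$. Hence $\rho_r(A)=\rho_c(A)=n$, which is the assertion, and the remaining cases (hypothesis satisfied by $2_A$ instead of $1_A$) are handled identically. One could alternatively route the middle step through Corollary \ref{th9} or Theorem \ref{th21} via the determinant, but this is cosmetic.

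I do not expect a genuine obstacle: every step is either the classical field-linear-algebra equivalence ``independent rows $\iff$ independent columns $\iff$ non-singular'' for square matrices, or a direct citation of Theorems \ref{th30} and \ref{th32}. The only point worth flagging in the write-up is the role of squareness — it is what lets a one-sided independence hypothesis on a single component be upgraded to a two-sided one — which also explains why the corollary is stated for $A\in\C_2^{n\times n}$ rather than for general rectangular $A$.
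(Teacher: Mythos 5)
Your proof is correct and follows essentially the same route as the paper, which simply declares the corollary an immediate consequence of Theorems \ref{th30} and \ref{th32}; the bridging step you make explicit --- that for a square complex component $1_A$ (or $2_A$) independence of rows is equivalent to independence of columns, so both theorems become applicable from a one-sided hypothesis --- is exactly the intended reading. Your remark on where squareness is used is a helpful clarification rather than a deviation.
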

    
\begin{corollary}\label{th35}
If $A\in \C_2^{n\times n}$ and $det(A)\notin O_2$, then $\rho_r(A)$ = $\rho_c(A) = n$.   
\end{corollary}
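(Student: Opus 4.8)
The plan is to push everything down to the complex component matrices $1_A$ and $2_A$ and then invoke the two independence theorems just proved. First I would unwind the hypothesis: $\det(A)\notin O_2$ means precisely that $A$ is a non-singular bicomplex matrix, so Corollary \ref{th9} (equivalently Theorem \ref{th7}) gives $\det(1_A)\neq 0$ and $\det(2_A)\neq 0$. Hence $1_A$ and $2_A$ are invertible matrices in $\C_1^{n\times n}$.

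From invertibility over $\C_1$, classical linear algebra tells us that the $n$ rows of $1_A$ are linearly independent in $\C_1^{n}$ and, likewise, the $n$ columns of $1_A$ are linearly independent. (Either component works; I only need one of them.) Now I would apply Theorem \ref{th30}: since the rows of $1_A$ are linearly independent, that theorem gives $\rho_r(A)=n$. Dually, applying Theorem \ref{th32} to the linearly independent columns of $1_A$ gives $\rho_c(A)=n$. Combining these two equalities yields $\rho_r(A)=\rho_c(A)=n$, which is the assertion.

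I do not expect a genuine obstacle here; the proof is a two-line chain of implications once the components are brought in. The one subtlety worth flagging is that one must \emph{not} try to deduce this from $\rho(A)$: the earlier remark (with $A=\begin{bmatrix} e_1 & e_2\\ e_2 & e_1\end{bmatrix}$-type examples, and the $\begin{bmatrix} e_1 & e_2 & 0\\ e_2 & e_1 & 6\end{bmatrix}$ example) shows that $\det(A)\notin O_2$ is compatible with $\rho(A)$ being far below $n$, so the argument genuinely has to travel through the row/column independence of $1_A$ (or $2_A$) rather than through the sub-matrix rank $\rho$. Alternatively, one may simply cite Corollaries \ref{th31} and \ref{th33}, which already record $\rho_r(A)=n$ and $\rho_c(A)=n$ respectively under the hypothesis $\det(A)\notin O_2$; the present corollary is then just the conjunction of those two statements.
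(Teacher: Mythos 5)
Your argument is correct and follows essentially the same route as the paper, which presents Corollary \ref{th35} as an immediate consequence of Theorems \ref{th30} and \ref{th32} (via Corollary \ref{th9} giving $\det(1_A)\neq 0$ and $\det(2_A)\neq 0$, hence independent rows and columns of the component matrices). Your closing observation that it is also just the conjunction of Corollaries \ref{th31} and \ref{th33} matches the paper's own chain of deductions.
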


\begin{proposition}\label{th36}
If $A=[\xi_{ij}]_{n\times n} \in \C_2^{n\times n}$ and $\rho(A) = n$, then $\rho(A)$ = $\rho_c(A)$ = $\rho_r(A)$.    
\end{proposition}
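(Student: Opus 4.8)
The plan is to reduce the statement to results already established in this section, since $\rho(A)=n$ is a very strong hypothesis for a square matrix. First I would observe that, because $A$ has order $n\times n$ and $\rho(A)=n$, Corollary \ref{th22} applies with $n=m$, giving $\rho(1_A)=n$ and $\rho(2_A)=n$. Equivalently, $\det(1_A)\neq 0$ and $\det(2_A)\neq 0$, so by Corollary \ref{th9} the matrix $A$ is non-singular, i.e. $\det(A)\notin O_2$. (Alternatively, this is exactly item (3) of the Remark following Definition \ref{th17}.)

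Next I would invoke Corollary \ref{th35}: since $A\in\C_2^{n\times n}$ and $\det(A)\notin O_2$, we immediately get $\rho_r(A)=\rho_c(A)=n$. Combining this with the hypothesis $\rho(A)=n$ yields
\[
\rho(A)=n=\rho_r(A)=\rho_c(A),
\]
which is the desired equality. If one prefers not to route through $\det(A)\notin O_2$ directly, the same conclusion follows from Theorem \ref{th21} (to get non-singularity from $\rho(1_A)=\rho(2_A)=n$) together with Corollary \ref{th34} or the "$\rho(A)=n$" items in the Remarks after Theorems \ref{th30} and \ref{th32}, which already record $\rho_r(A)=\rho(A)=\rho_r(1_A)=\rho_r(2_A)=n$ and the column analogue.

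I do not anticipate a genuine obstacle here: the content of the proposition has effectively been distilled into Corollaries \ref{th22}, \ref{th31}, \ref{th33}, and \ref{th35}, and the only point that requires a moment's care is the first implication — that $\rho(A)=n$ for a square bicomplex matrix forces $\det(A)\notin O_2$ — which in turn rests on the fact (Corollary \ref{th22}) that a non-singular sub-matrix series of full order $n$ in $A$ produces non-singular sub-matrix series of order $n$ in both complex components $1_A$ and $2_A$. Once that is in hand, the rest is a direct citation of the non-singular case.
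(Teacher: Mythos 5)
Your proposal is correct and follows essentially the same route as the paper's own proof: deduce $\det(A)\notin O_2$ from $\rho(A)=n$ and then apply Corollary \ref{th35} to conclude $\rho_r(A)=\rho_c(A)=n=\rho(A)$. The only difference is that you spell out the first implication via Corollary \ref{th22} and Corollary \ref{th9}, where the paper simply asserts it (citing the remark after Definition \ref{th17}); this is a harmless and arguably welcome elaboration.
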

\begin{proof}
Let $A=[\xi_{ij}]_{n \times n} \in \C_2^{n\times n}$ and $\rho(A) = n$, then $\det(A)\notin O_2$. Thus by using corollary \ref{th35},  $\rho(A)$ = $\rho_r(A)$ = $\rho_c(A)$.  
\end{proof}
\begin{theorem}\label{th37}
Let $A=[\xi_{ij}]_{n\times n} \in \C_2^{n\times n}$ be a non-singular matrix such that $\xi_{ij}\notin O_2$ for all $i,j$ and $n\le 3$. Then, $\rho(A) = n$.  
\end{theorem}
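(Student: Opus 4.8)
The plan is to reduce the statement to producing, for each $n\le 3$, a chain of non-singular sub-matrices $B_1\preccurlyeq B_2\preccurlyeq\cdots\preccurlyeq B_n=A$ with $B_i$ of order $i$. Since $A$ is non-singular we may take $B_n=A$ at the top, and since $\rho(A)\le\min(n,n)=n$ by the Remark following Definition~\ref{th17}, exhibiting such a chain forces $\rho(A)=n$. Because every entry satisfies $\xi_{ij}\notin O_2$, every $1\times 1$ sub-matrix of $A$ is non-singular, so for $n=1$ the chain is just $B_1=A$ and for $n=2$ it is $B_1=[\xi_{11}]\preccurlyeq B_2=A$. Thus the whole content lies in the case $n=3$, where the one missing link is a non-singular $2\times 2$ sub-matrix $B_2$ of $A$; once one is found, $B_1$ can be taken to be any of its entries (non-singular, as an entry of $A$) and $B_3=A$.

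To locate such a $B_2$, first pass to the idempotent decomposition $A=1_Ae_1+2_Ae_2$. By Corollary~\ref{th9} the $3\times 3$ complex matrices $1_A$ and $2_A$ are invertible, and since each $\xi_{ij}\notin O_2$ every entry of $1_A$ and of $2_A$ is non-zero. By Theorem~\ref{th15} a $2\times 2$ sub-matrix $B$ of $A$ has idempotent components $1_B,2_B$ equal to the $2\times 2$ sub-matrices of $1_A,2_A$ in the same rows and columns, and by Theorem~\ref{th7} together with Corollary~\ref{th10}, $B$ is non-singular precisely when $\det(1_B)\neq 0$ and $\det(2_B)\neq 0$, i.e.\ when the corresponding $2\times 2$ minors of $1_A$ and of $2_A$ are simultaneously non-zero. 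Since the nine $2\times 2$ minors of a $3\times 3$ matrix $M$ are, up to sign, the nine entries of $\operatorname{adj}(M)$, the task becomes: find one position $(p,q)$ at which $\operatorname{adj}(1_A)$ and $\operatorname{adj}(2_A)$ are both non-zero.

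The key step — and the one point that needs a genuine argument rather than bookkeeping — is the elementary claim that an invertible $P\in\C_1^{3\times 3}$ whose inverse $P^{-1}$ has all entries non-zero can have at most one zero entry in each row, hence at most three zeros in total. I would prove this straight from $PP^{-1}=I$: fix a row index $k$ and any column $j\neq k$; the $(k,j)$-entry of $PP^{-1}$ vanishes, but $P^{-1}$ has no zero entry, so a row $k$ of $P$ with a single non-zero entry would make that $(k,j)$-entry one non-zero product — a contradiction — while a zero row is impossible since $P$ is invertible. Applying this to $P=(1_A)^{-1}$ and to $P=(2_A)^{-1}$ (whose inverses $1_A$ and $2_A$ are entrywise non-zero) and recalling $\operatorname{adj}(M)=\det(M)\,M^{-1}$, we get that $\operatorname{adj}(1_A)$ and $\operatorname{adj}(2_A)$ each have at most three zero positions. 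Their zero-sets therefore cannot together cover all nine positions of a $3\times 3$ array, so a common non-zero position $(p,q)$ exists. Taking $B$ to be the $2\times 2$ sub-matrix of $A$ in the rows other than $q$ and the columns other than $p$ then makes $B$ non-singular by the translation above, which completes the chain and proves $\rho(A)=3$.

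Finally, I would record that the hypothesis $\xi_{ij}\notin O_2$ is essential to this argument: if instead $1_A=I_3$ and $2_A$ is an invertible matrix with all entries non-zero whose three $2\times 2$ principal minors all vanish, then the zero-set of $\operatorname{adj}(1_A)$ is the off-diagonal and that of $\operatorname{adj}(2_A)$ is the diagonal, which together exhaust all nine positions; such an $A$ is non-singular but has no non-singular $2\times 2$ sub-matrix, so $\rho(A)=1<3$.
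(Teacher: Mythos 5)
Your proof is correct, but it reaches the key object --- a single $2\times 2$ sub-matrix $B$ of $A$ with $\det(1_B)\neq 0$ and $\det(2_B)\neq 0$ simultaneously --- by a genuinely different route. The paper fixes one $2\times 3$ sub-matrix $B$ of $A$, observes that $1_B$ and $2_B$ each have rank $2$ with all columns non-zero (this is where the hypothesis $\xi_{ij}\notin O_2$ enters for them), shows that consequently at least two of the three column pairs of each are linearly independent, and concludes by the pigeonhole $2+2>3$ that some column pair works for both components; this only ever inspects the three $2\times 2$ sub-matrices sitting in a fixed pair of rows. You instead survey all nine $2\times 2$ minors of $1_A$ and $2_A$ via their adjugates, and your load-bearing step is the standalone lemma that an invertible $3\times 3$ complex matrix whose inverse is entrywise non-zero has at most one zero per row, hence at most three zeros; the pigeonhole is then $3+3<9$. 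Both arguments are sound and of comparable length. What the paper's version buys is a smaller search space (three candidates rather than nine) and a shape that visibly feeds the chain $A_1\preccurlyeq A_2\preccurlyeq A_3$ of Definition~\ref{th17}; what yours buys is a cleaner, reusable lemma, an explanation of exactly which positions can fail (the zero set of $\operatorname{adj}(1_A)\cdot$ versus $\operatorname{adj}(2_A)$), and the closing example showing the entry hypothesis cannot be dropped --- a point the paper does not address. It is also worth noting that your count degrades to $2n(n-2)$ zeros against $n^2$ positions for general $n$, which fails precisely from $n=4$ onward, so your method gives a concrete reason why the argument stops at $n\le 3$ and why Problem~1 remains open.
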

\begin{proof}
Let $A=[\xi_{ij}]_{n\times n} \in \C_2^{n\times n}$ be  non-singular matrix such that $\xi_{ij}\notin O_2$ for all $i,j$.\\  
\textbf{Case (i):} If $n=1$, then clearly $\rho(A) = 1$.\\
\textbf{Case (ii):} If $n=2$, then 
$A =\begin{bmatrix}
\xi_{11} & \xi_{12}\\
\xi_{21} & \xi_{22}\\
\end{bmatrix}$.\\
Consider 
\begin{align*}
A_1=
\begin{bmatrix}
\xi_{11}
\end{bmatrix}\quad  \mbox{and} \quad A_2=\begin{bmatrix}
\xi_{11} & \xi_{12} \\
\xi_{21} & \xi_{22}  
\end{bmatrix}.   
\end{align*}
Since $\xi_{ij}\notin O_2$ for all $i,j$ and $\det(A) \notin O_2$, therefore we have a series of non-singular sub-matrices of $A$ such that $A_1 \preccurlyeq A_2=A$. Thus $\rho(A)$ will be $2$.\\
\textbf{Case (iii):} $n=3$.\\
Let, $B=1_B e_1 +2_B e_2$ be any sub-matrix of $A$ of order $2\times 3$. Then
\begin{align*}
1_B=\begin{bmatrix}
C_1 & C_2 & C_3    
\end{bmatrix} \quad \mbox{and} \quad
2_B=\begin{bmatrix}
C_1' & C_2' & C_3'
\end{bmatrix},
\end{align*}
where $C_i,C_i';i\in\{1,2,3\}$ are columns of $1_B$ and $2_B$ respectively.
Since $\det(A) \notin O_2$ and $\xi_{ij}\notin O_2$ for all $i,j$, therefore $\rho_c(1_B) = \rho_c(2_B) = 2$ and $C_i\neq 0,C_i'\neq 0;\forall \ i\in\{1,2,3\}$.\\
Since $\rho_c(1_B) = 2$ , this implies that there exist $i, j\in\{1,2,3\};i<j$ such that $C_i$ and $C_j$ are linearly independent.\vspace{0.05cm}

Let us suppose \textquotedblleft $C_i \ \mbox{and} \ C_k$\textquotedblright \ and \textquotedblleft$C_j\ \mbox{and} \ C_k$\textquotedblright \ be linearly dependent columns, where $k\in\{1,2,3\};i\neq k$ and $j\neq k$. Then there exist  $\alpha\neq0$ and $\beta\neq0 \in C_1$ such that $C_i=\alpha C_k$ and $C_j=\beta C_k$. It follows that $C_i$ and $C_j$ are linearly dependent, which is a contradiction because $C_i$ and $C_j$ are linearly independent. Therefore, our assumption \textquotedblleft$C_i \ \mbox{and} \ C_k$\textquotedblright \ and \textquotedblleft$C_j\ \mbox{and} \ C_k$\textquotedblright are linearly dependent is not true, i.e. either \textquotedblleft$C_i \ \mbox{and} \ C_k$\textquotedblright are linearly independent or \textquotedblleft$C_j\ \mbox{and} \ C_k$\textquotedblright are linearly independent, which evidently implies that the set of vectors $\{C_i,C_k\}$ and $\{C_i,C_j\}$ are linearly independent or the set of vectors $\{C_i,C_j\}$ and $\{C_j,C_k\}$ are linearly independent for $i,j \ \mbox{and} \ k \in \{1,2,3\}$; $i\neq j, j\neq k$ and $k\neq i$.

In same manner, for the matrix $2_B$ we can find that either the set of vectors $\{C_l',C_m'\}$ and $\{C_l',C_n'\}$ or the set of vectors $\{C_l',C_m'\}$ and $\{C_m',C_n'\}$ are linearly independent.

Now, we list the possible pair of non-singular sub-matrices of $1_B$ and $2_B$ of order $2\times2$:
\begin{center}
\begin{tabular}{||c | c||} 
 \hline
 \textbf{Possible pair of non singular} &  \textbf{Possible pair of non singular} \\ 
  \textbf{sub-matrices of $1_B$ of order 2} &  \textbf{sub-matrices of $2_B$ of order 2} \\
 [0.5ex] 
 \hline\hline
 $[C_1,C_2]$ and $[C_1,C_3]$ & $[C_1',C_2']$ and $[C_1',C_3']$  \\ 
 \hline
 $[C_1,C_2]$ and $[C_2,C_3]$ & $[C_1',C_2']$ and $[C_2',C_3']$  \\
 \hline
 $[C_1,C_3]$ and $[C_2,C_3]$ & $[C_1',C_3']$ and $[C_2',C_3']$ \\
 \hline
\end{tabular}
\end{center}
It is completely clear from the above that there are nine cases formed from the above possibilities to form non singular sub-matrix of $B$ of order $2$. In each case, there exist $\alpha,\beta$ such that $[C_\alpha, C_\beta] \ and \ [C'_\alpha, C'_\beta] $ are non singular sub-matrices of order $2$ of $1_B$ and $2_B$ respectively for some $\alpha,\beta \in \{1,2,3\}$, where $\alpha<\beta$.

Now, we construct a bicomplex matrix 
\[
M=[C_\alpha \ C_\beta]e_1+[C'_\alpha \ C'_\beta]e_2.
\]

It is evident that $M$ is a non-singular sub-matrix of order $2$ of matrix $B$. This implies that $M$ is the sub-matrix of order $2$ of matrix $A$ such that $\det(M)\notin O_2$. Then, there exist $\xi_{pq}\notin O_2$ such that $[\xi_{pq}]_{1\times 1}$ is a sub-matrix of $M$ of order $1$, where $p,q\in \{1,2,3\}$.

Let $A_1=[\xi_{pq}]_{1 \times 1}$, $A_2=M$ and $A_3=A$. Since $\xi_{pq}\notin O_2$, $\det(M)\notin O_2$ and $\det(A)\notin O_2$, therefore we have a series of non- singular sub-matrices of A such that
\[
A_1\preccurlyeq A_2 \preccurlyeq A_3=A
.\]
Hence $\rho(A) = 3$.\\ 
The proof of theorem is completed.
\end{proof}

\begin{theorem}\label{th38}
Let $V$ be a subspace of $\C_1^n(\C_1)$. Then, $V e_1$ is a subspace of $\C_2^n(\C_1)$.
\end{theorem}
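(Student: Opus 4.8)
The plan is to verify directly the three subspace axioms for $Ve_1$ inside the $\C_1$-vector space $\C_2^n(\C_1)$: that it contains the zero vector, that it is closed under addition, and that it is closed under multiplication by scalars from $\C_1$. That $Ve_1$ is a subset of $\C_2^n$ has already been recorded immediately after the definition of $Ve_i$, so only the closure properties need attention. The unifying idea is that $v\mapsto v e_1$ is a $\C_1$-linear map from $\C_1^n(\C_1)$ into $\C_2^n(\C_1)$, and $Ve_1$ is precisely the image of the subspace $V$ under this map.

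First, since $V$ is a subspace of $\C_1^n$, the zero tuple $(0,\dots,0)$ lies in $V$, and $(0,\dots,0)e_1=(0,\dots,0)$ is the zero of $\C_2^n$; hence $0\in Ve_1$. Next, for closure under addition, take $xe_1,ye_1\in Ve_1$ with $x=(x_1,\dots,x_n),\,y=(y_1,\dots,y_n)\in V$. Using the componentwise form of multiplication by $e_1$ from Remark \ref{th49}, namely $(\xi_1,\dots,\xi_n)e_1=(\xi_1 e_1,\dots,\xi_n e_1)$, together with $e_1^2=e_1$, we compute
\[
xe_1+ye_1=(x_1e_1,\dots,x_ne_1)+(y_1e_1,\dots,y_ne_1)=\big((x_1+y_1)e_1,\dots,(x_n+y_n)e_1\big)=(x+y)e_1 .
\]
Since $V$ is a subspace, $x+y\in V$, and therefore $xe_1+ye_1=(x+y)e_1\in Ve_1$.

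Finally, for closure under $\C_1$-scalar multiplication, let $\alpha\in\C_1$ and $xe_1\in Ve_1$ with $x=(x_1,\dots,x_n)\in V$. Applying the scalar rule $\kappa(\xi_1,\dots,\xi_n)=(\kappa\xi_1,\dots,\kappa\xi_n)$ of Remark \ref{th49} (which rests on $\C_2^n$ being a $\C_1$-algebra) once with $\kappa=\alpha$ and once with $\kappa=e_1$, we obtain
\[
\alpha(xe_1)=\alpha(x_1e_1,\dots,x_ne_1)=(\alpha x_1e_1,\dots,\alpha x_ne_1)=(\alpha x_1,\dots,\alpha x_n)e_1=(\alpha x)e_1 .
\]
Because $V$ is a $\C_1$-subspace, $\alpha x\in V$, so $\alpha(xe_1)=(\alpha x)e_1\in Ve_1$. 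Thus $Ve_1$ contains the zero vector and is closed under addition and under $\C_1$-scalar multiplication, so it is a subspace of $\C_2^n(\C_1)$.

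There is essentially no obstacle here; the only point requiring a little care is the bookkeeping identity $\alpha(xe_1)=(\alpha x)e_1$, which guarantees that the $\C_1$-action on $\C_2^n$ is correctly transported through multiplication by $e_1$, and this is exactly what Remark \ref{th49} and the $\C_1$-algebra structure of $\C_2^n$ supply. (If one prefers a cleaner packaging, one may instead prove once and for all that $\phi\colon\C_1^n(\C_1)\to\C_2^n(\C_1)$, $\phi(v)=ve_1$, is $\C_1$-linear, and then invoke the standard fact that the image of a subspace under a linear map is a subspace; the displays above are precisely the verification of that linearity, and the same argument will apply verbatim to $Ve_2$.)
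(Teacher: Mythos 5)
Your proof is correct and follows essentially the same route as the paper's: both verify closure of $Ve_1$ under addition and under $\C_1$-scalar multiplication by the componentwise computations $xe_1+ye_1=(x+y)e_1$ and $\alpha(xe_1)=(\alpha x)e_1$ drawn from Remark \ref{th49}. Your added observations (checking the zero vector explicitly, and the packaging of the argument as the image of $V$ under the $\C_1$-linear map $v\mapsto ve_1$) are harmless refinements of the same idea.
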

\begin{proof}
Let $V$ be a subspace of $\C_1^n(\C_1)$. Then, $Ve_1$ is a subset of $\C_2^n$.\\
Let us consider $X,Y\in Ve_1$. Then, there exists $(x_1,x_2,\ldots,x_n),(y_1,y_2,\ldots,y_n)\in V$ such that $X=(x_1,x_2,\ldots,x_n)e_1$ and $Y=(y_1,y_2,\ldots,y_n)e_1$. Now, by using Remark \ref{th49}, we have
\begin{eqnarray*}
X+Y &=& (x_1,x_2,\ldots,x_n)e_1+(y_1,y_2,\ldots,y_n)e_1\\
&=&(x_1 e_1,x_2 e_1,\ldots,x_n e_1)+(y_1 e_1,y_2
e_1,\ldots,y_n e_1)\\
&=&(x_1 e_1+y_1 e_1,x_1 e_1+y_1 e_1,\ldots,x_n e_1+y_n e_1)\\
&=&((x_1+y_1)e_1,(x_2+y_2)e_1,\ldots,(x_n+y_n)e_1)\\
&=&(x_1+y_1, x_2+y_2, \ldots,x_n+y_n)e_1\in V e_1
.\end{eqnarray*}
and     
\begin{eqnarray*}
\alpha X &=& \alpha\{(x_1,x_2,\ldots,x_n)e_1\}\\
&=&\alpha(x_1 e_1,x_2 e_1,\ldots,x_n e_1)\\
&=&(\alpha(x_1 e_1),\alpha(x_2 e_1),\ldots,\alpha(x_n e_1))\\
&=&((\alpha x_1) e_1,(\alpha x_2) e_1,\ldots,(\alpha x_n) e_1)\\
&=&(\alpha x_1, \alpha x_2, \ldots, \alpha x_n) e_1 \in V e_1
.\end{eqnarray*}
Since $V e_1$ is a subset of $\C_2^n$. Thus, $V e_1$ forms a subspace of $\C_2^n(\C_1)$.
\end{proof}
Dually we may prove the following result.
\begin{theorem}\label{th39}
Let $V$ be a  subspace of $\C_1^n(\C_1)$. Then, $V e_2$ is subspace of $C_2^n(\C_1)$.    
\end{theorem}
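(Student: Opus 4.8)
The plan is to mimic the proof of Theorem \ref{th38} with $e_1$ replaced by $e_2$ throughout, which works because the only structural facts used about $e_1$ are algebraic identities that $e_2$ shares: namely the scalar-multiplication rule $\kappa(\xi_1,\ldots,\xi_n)=(\kappa\xi_1,\ldots,\kappa\xi_n)$ and the idempotent-product rule $(\xi_1,\ldots,\xi_n)e_2=(\xi_1 e_2,\ldots,\xi_n e_2)$ from Remark \ref{th49}, together with closure of $V$ under addition and $\C_1$-scalar multiplication.

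First I would note that by the definition of $Ve_2$, it is a subset of $\C_2^n$, so it suffices to verify closure under addition and under $\C_1$-scalar multiplication. Second, I would take $X,Y\in Ve_2$, write $X=(x_1,\ldots,x_n)e_2$ and $Y=(y_1,\ldots,y_n)e_2$ with $(x_1,\ldots,x_n),(y_1,\ldots,y_n)\in V$, and compute $X+Y$ componentwise using Remark \ref{th49} to obtain $X+Y=(x_1+y_1,\ldots,x_n+y_n)e_2$; since $V$ is a subspace of $\C_1^n(\C_1)$, the tuple $(x_1+y_1,\ldots,x_n+y_n)$ lies in $V$, so $X+Y\in Ve_2$. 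Third, for $\alpha\in\C_1$ and $X\in Ve_2$ as above, I would compute $\alpha X=\alpha\big((x_1,\ldots,x_n)e_2\big)=(\alpha x_1,\ldots,\alpha x_n)e_2$, and again use closure of $V$ to conclude $(\alpha x_1,\ldots,\alpha x_n)\in V$, hence $\alpha X\in Ve_2$. This establishes that $Ve_2$ is a subspace of $\C_2^n(\C_1)$.

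There is essentially no obstacle here: every manipulation is a direct transcription of the $e_1$ argument, and the paper itself flags this by writing "Dually we may prove the following result." The one point worth a moment's care is making sure the scalar field is $\C_1$ (not $\C_2$) throughout, so that the relevant closure property of $V$ — which is stated for the $\C_1$-vector-space structure $\C_1^n(\C_1)$ — actually applies; with that observation the proof is routine. For this reason I would simply write "The proof is similar to that of Theorem \ref{th38}, replacing $e_1$ by $e_2$ and using the corresponding identities in Remark \ref{th49}," or, if a self-contained argument is preferred, spell out the two displayed computations above verbatim with $e_2$ in place of $e_1$.
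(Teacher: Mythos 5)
Your proposal is correct and matches the paper exactly: the paper offers no separate argument for Theorem \ref{th39}, stating only ``Dually we may prove the following result,'' and your transcription of the Theorem \ref{th38} proof with $e_2$ in place of $e_1$ (checking closure under addition and $\C_1$-scalar multiplication via Remark \ref{th49}) is precisely that dual argument. Your remark about keeping the scalar field $\C_1$ is a sensible point of care but introduces no new content beyond the paper's intent.
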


Combining Theorems \ref{th38} and \ref{th39} we get following result.
\begin{proposition}\label{th40}
If $1_V$ and  $2_V$ are two sub-spaces of $\C_1^n(\C_1)$, then $1_V e_1 + 2_V e_2$ is a subspace of $\C_2^n(\C_1)$.    
\end{proposition}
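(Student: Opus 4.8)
The plan is to recognize $1_V e_1 + 2_V e_2$ as the (Minkowski) sum of the two sets $1_V e_1$ and $2_V e_2$, i.e.
\[
1_V e_1 + 2_V e_2 = \{\, u e_1 + v e_2 \;:\; u \in 1_V,\ v \in 2_V \,\},
\]
and then to exploit that, by Theorems \ref{th38} and \ref{th39}, each of $1_V e_1$ and $2_V e_2$ is already a subspace of $\C_2^n(\C_1)$. Since the sum of two subspaces of a vector space is again a subspace, the proof reduces to verifying the standard closure conditions directly in the bicomplex setting.

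First I would check that $1_V e_1 + 2_V e_2$ is a nonempty subset of $\C_2^n$: each summand lies in $\C_2^n$ (as noted right after the definition of $V e_i$), so any $u e_1 + v e_2$ does too, and $0 = 0 e_1 + 0 e_2$ belongs to the set because $0 \in 1_V$ and $0 \in 2_V$, both being subspaces.

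Next, for closure under addition, take $X = u_1 e_1 + v_1 e_2$ and $Y = u_2 e_1 + v_2 e_2$ with $u_1, u_2 \in 1_V$ and $v_1, v_2 \in 2_V$. Using the componentwise addition on $\C_2^n$ from Remark \ref{th49} together with the idempotent identities $e_1^2 = e_1$, $e_2^2 = e_2$, $e_1 e_2 = e_2 e_1 = 0$, the sum collapses to $X + Y = (u_1 + u_2) e_1 + (v_1 + v_2) e_2$; since $1_V$ and $2_V$ are subspaces, $u_1 + u_2 \in 1_V$ and $v_1 + v_2 \in 2_V$, so $X + Y \in 1_V e_1 + 2_V e_2$. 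Likewise, for $\alpha \in \C_1$ the $\C_1$-module structure of $\C_2^n$ (again Remark \ref{th49}) gives $\alpha X = (\alpha u_1) e_1 + (\alpha v_1) e_2$ with $\alpha u_1 \in 1_V$ and $\alpha v_1 \in 2_V$, hence $\alpha X \in 1_V e_1 + 2_V e_2$. Alternatively, closure under both operations can be quoted almost verbatim from the computations already carried out in the proofs of Theorems \ref{th38} and \ref{th39}.

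I do not expect a genuine obstacle here; this is the familiar fact that the sum of two subspaces is a subspace, relocated to $\C_2^n(\C_1)$. The one point needing a moment's care is the bookkeeping in the idempotent decomposition: one must invoke Remark \ref{th49} and the relations among $e_1, e_2$ to be sure that adding $u_1 e_1 + v_1 e_2$ and $u_2 e_1 + v_2 e_2$ yields exactly $(u_1 + u_2) e_1 + (v_1 + v_2) e_2$, with the first term in $1_V e_1$ and the second in $2_V e_2$, so that no cross terms survive. With that observed, nonemptiness together with the two closure properties completes the argument.
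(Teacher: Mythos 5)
Your proposal is correct and follows essentially the same route as the paper, which states Proposition \ref{th40} as an immediate consequence of Theorems \ref{th38} and \ref{th39} via the standard fact that the sum of two subspaces is a subspace; you simply make explicit the closure computations that the paper leaves implicit. The only cosmetic remark is that closure under addition needs only the componentwise addition from Remark \ref{th49}, not the idempotent identities $e_1^2=e_1$, $e_2^2=e_2$, $e_1e_2=0$, which you invoke unnecessarily at that step.
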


\begin{remark}
It follows from Proposition \ref{th40}, if $A\in \C_2^{n\times m}$, then idempotent row space of $A$ and idempotent column space of $A$ are sub-spaces of $\C_2^m$ and $\C_2^n$, respectively.   
\end{remark} 
The following corollary is immediate consequence of Remark 2.42 and Theorem \ref{th29}.
\begin{corollary}\label{th41}
If $A\in \C_2^{n\times m}$, then column space of $A$ is a subspace of idempotent column space of $A$.   
\end{corollary}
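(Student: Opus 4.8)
The plan is to combine the set-theoretic containment already proved in Theorem \ref{th29} with the vector-space structure recorded in Proposition \ref{th40} (and the remark immediately following it). Recall that by Definition \ref{th20} the column space of $A$ is, by construction, a subspace of the $\C_1$-vector space $\C_2^n(\C_1)$, being the $\C_1$-span of the columns of $A$. Likewise, by Proposition \ref{th40}, the idempotent column space of $A$, which equals $(\text{column space of }1_A)\,e_1 + (\text{column space of }2_A)\,e_2$, is also a subspace of $\C_2^n(\C_1)$. Thus both objects live inside one and the same ambient $\C_1$-vector space, so their addition and scalar multiplication are the restrictions of the same operations.

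First I would invoke Theorem \ref{th29} to conclude that every vector in the column space of $A$ belongs to the idempotent column space of $A$; that is, the column space of $A$ is a subset of the idempotent column space of $A$. Next I would apply the elementary fact that a subspace of a vector space that happens to be contained in another subspace of that same space is itself a subspace of the latter: if $W \subseteq W' \subseteq U$ with $W$ and $W'$ subspaces of $U$, then $W$ is nonempty, closed under addition, and closed under scalar multiplication with these operations inherited from $U$ (hence from $W'$), so $W$ satisfies the subspace axioms relative to $W'$. Applying this with $U = \C_2^n(\C_1)$, $W = $ column space of $A$, and $W' = $ idempotent column space of $A$ completes the proof.

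There is essentially no obstacle here: the statement is a purely formal consequence of "a subspace contained in a subspace is a subspace of it," together with Theorem \ref{th29} and Proposition \ref{th40}. The only point requiring a moment's care is that the column space and the idempotent column space of $A$ are both taken inside the same $\C_1$-vector space $\C_2^n(\C_1)$, so that the inherited linear structure is unambiguous; this is immediate from Definitions \ref{th20} and \ref{th27}.
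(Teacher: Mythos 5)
Your proposal is correct and follows the same route as the paper, which derives the corollary directly from Theorem \ref{th29} (the set containment) together with the remark after Proposition \ref{th40} (that the idempotent column space is a subspace of $\C_2^n(\C_1)$). The only addition you make is to spell out explicitly the elementary fact that a subspace contained in another subspace of the same ambient space is a subspace of the latter, which the paper leaves implicit.
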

The following corollary is immediate consequence of Remark 2.42 and Theorem \ref{th28}.
\begin{corollary}\label{th42}
If $A\in \C_2^{n\times m}$, then row space of $A$ is a subspace of idempotent row space of $A$.   
\end{corollary}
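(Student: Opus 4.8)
The plan is to deduce the statement directly from Theorem \ref{th28} together with the structural fact recorded in Remark 2.42, so the proof will be short. First I would recall that, by Definition \ref{th19}, the row space of $A$ is by construction a subspace of the $\C_1$-vector space $\C_2^m(\C_1)$ — it is the $\C_1$-span of the rows of $A$ — so the row space of $A$ already carries the structure of a $\C_1$-vector space under the addition and scalar multiplication inherited from $\C_2^m$.

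Next I would invoke Theorem \ref{th28}, which furnishes the set-theoretic inclusion: every element of the row space of $A$ lies in the idempotent row space of $A$. Combining this with Remark 2.42 (which in turn follows from Proposition \ref{th40}, applied to the subspaces ``row space of $1_A$'' and ``row space of $2_A$'' of $\C_1^m(\C_1)$), the idempotent row space of $A$ is itself a subspace of $\C_2^m(\C_1)$, hence a $\C_1$-vector space under the same inherited operations.

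The final step is the elementary observation that if $W_1 \subseteq W_2$ are subspaces of a common vector space $X$, then $W_1$ is a subspace of $W_2$: the operations making $W_1$ a vector space are precisely the restrictions to $W_1$ of the operations on $X$, which coincide with those on $W_2$, and closure of $W_1$ under them is already known. Applying this with $X = \C_2^m(\C_1)$, with $W_1$ the row space of $A$, and with $W_2$ the idempotent row space of $A$, we conclude that the row space of $A$ is a subspace of the idempotent row space of $A$.

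I do not expect any genuine obstacle here; all the real content sits in Theorem \ref{th28} (the inclusion) and in Proposition \ref{th40}/Remark 2.42 (that the ambient idempotent row space is a subspace). The only point requiring a moment's care is to check that the $\C_1$-vector space structure on the row space of $A$ is literally the restriction of the structure on $\C_2^m(\C_1)$, and hence of the structure on the idempotent row space — but this is immediate from Definitions \ref{th19} and \ref{th26}.
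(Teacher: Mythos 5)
Your proposal is correct and follows exactly the route the paper intends: the paper states Corollary \ref{th42} as an immediate consequence of Theorem \ref{th28} (the set-theoretic inclusion) together with Remark 2.42 / Proposition \ref{th40} (that the idempotent row space is a subspace of $\C_2^m(\C_1)$), which is precisely your argument. The only thing you add is the explicit (and harmless) verification that nested subspaces of a common ambient space inherit compatible operations, which the paper leaves implicit.
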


\begin{proposition}\label{th43}
If $1_V$ and $2_V$ be two sub-spaces of $\C_1^n$. Then $1_Ve_1\cap 2_V e_2=\{0\}$.
\end{proposition}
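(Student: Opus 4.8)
The plan is to prove both inclusions, the easy one being $\{0\}\subseteq 1_V e_1\cap 2_V e_2$: since $1_V$ and $2_V$ are subspaces of $\C_1^n$, the zero tuple lies in each, and $0=0\cdot e_1=0\cdot e_2$ lies in both $1_V e_1$ and $2_V e_2$. So the substance is to show $1_V e_1\cap 2_V e_2\subseteq\{0\}$.

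First I would take an arbitrary $X\in 1_V e_1\cap 2_V e_2$. By the definition of the sets $Ve_i$, there exist $v=(v_1,\ldots,v_n)\in 1_V$ and $w=(w_1,\ldots,w_n)\in 2_V$ with $X=v e_1$ and $X=w e_2$. Using Remark \ref{th49} this reads $X=(v_1 e_1,\ldots,v_n e_1)=(w_1 e_2,\ldots,w_n e_2)$. Next I would multiply $X$ by $e_2$ and exploit the coordinatewise product together with the two elementary identities $e_1 e_2=0$ and $e_2^2=e_2$, which follow from $e_1=\tfrac{1+i_1 i_2}{2}$, $e_2=\tfrac{1-i_1 i_2}{2}$ and $i_1^2=i_2^2=-1$. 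On one hand, from $X=v e_1$ and Remark \ref{th49}, $X e_2=(v_1(e_1 e_2),\ldots,v_n(e_1 e_2))=(0,\ldots,0)$; on the other hand, from $X=w e_2$, $X e_2=(w_1 e_2^2,\ldots,w_n e_2^2)=(w_1 e_2,\ldots,w_n e_2)=X$. Hence $X=0$, completing the inclusion.

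Alternatively, and perhaps more consistently with the framework set up in Definition \ref{th5}, I would argue by uniqueness of the idempotent representation of an element of $\C_2^n$: writing $X=v e_1=v e_1+(0,\ldots,0)e_2$ shows that the idempotent components of $X$ are $v$ and the zero tuple, while $X=w e_2=(0,\ldots,0)e_1+w e_2$ shows they are the zero tuple and $w$; comparing, $v=0$ and $w=0$, so $X=0$. Either way the argument is short, and there is no real obstacle — the only points requiring a little care are recording the identities $e_1 e_2=0$ and $e_2^2=e_2$ (or, equivalently, invoking uniqueness of the idempotent decomposition) and noting that $1_V,2_V$ being subspaces is exactly what makes the trivial inclusion hold.
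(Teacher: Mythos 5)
Your proposal is correct and matches the paper's argument: the paper also takes an element of the intersection, writes it as $(z_1,\ldots,z_n)e_1=(w_1,\ldots,w_n)e_2$ via Remark \ref{th49}, and concludes coordinatewise that $z_i=w_i=0$, which is exactly your uniqueness-of-idempotent-representation route (your $e_2$-multiplication variant is just a mechanical rephrasing of the same step). Both also record the trivial reverse inclusion, so nothing is missing.
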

\begin{proof}
Let $1_V$ and $2_V$ be two sub-spaces of $\C_1^n$ and $(\xi_{1},\xi_{2},\cdots, \xi_{n}) \in 1_Ve_1  \cap 2_V e_2$. Then, 
\begin{center}
 $(\xi_{1},\xi_{2},\cdots, \xi_{n}) \in 1_V e_1$ and $(\xi_{1},\xi_{2},\cdots, \xi_{n}) \in 2_V e_2$.   
\end{center}
This implies that there exist \  $(z_{1},z_{2},\cdots, z_{n})\in 1_V$ and $( w_{1},w_{2},\cdots, w_{n})\in 2_V$ such that
\[
(\xi_{1},\xi_{2},\cdots, \xi_{n}) =(z_{1},z_{2},\cdots, z_{n})e_1 \mbox{and} (\xi_{1},\xi_{2},\cdots, \xi_{n}) =(w_{1},w_{2},\cdots, w_{n})e_2.
\]
Then, by using Remark \ref{th49}, we have
$(\xi_{1},\xi_{2},\cdots, \xi_{n}) =(z_{1}e_1,z_{2}e_2,\cdots, z_{n}e_1) \ and \ (\xi_{1},\xi_{2},\cdots, \xi_{n}) =(w_{1}e_2,w_{2}e_2,\cdots, w_{n}e_2)$
\begin{eqnarray*} 
&\Rightarrow&\xi_{i} = z_{i} e_1\ \ and\ \ \xi_{i} = w_{i}e_2 \ \forall \ i; \ 1\le i \le n\\   
&\Rightarrow& \ z_{i} e_1=w_{i} e_2 \ \forall \ i ;\ 1\le i \le n\\
&\Rightarrow& z_{i}=0 \quad and \quad w_{i}=0\ \forall \ i;\ 1\le i \le n\\
&\Rightarrow& (\xi_{1},\xi_{2},\cdots, \xi_{n})=(0,0,\cdots, 0)
.\end{eqnarray*}
Therefore,
\begin{align}\label{1}
1_V e_1 \cap 2_V e_2 \subseteq \{0\}.    
\end{align}
Since $(0,0,\cdots, 0)\in 1_V e_1$   and $(0,0,\cdots, 0) \in 2_V e_2$. Clearly $(0,0,\cdots, 0)\in 1_V e_2 \cap 2_V e_2$. So, we have
\begin{align}\label{2}
\{0\} \subseteq 1_V e_2 \cap 2_V e_2
.\end{align}
Using \eqref{1} and \eqref{2}, we get
\[
1_V e_2 \cap 2_V e_2 = \{0\}
.\]
The proof of proposition is completed.
\end{proof}
The following corollary is immediate consequence of Proposition \ref{th40} and Proposition \ref{th43}.
\begin{corollary}\label{th44}
If $1_V$ and  $2_V$ are two sub-spaces of $\C_1^n(\C_1)$. Then, $1_V e_1 + 2_V e_2 = 1_V e_1 \oplus 2_V e_2$.  
\end{corollary}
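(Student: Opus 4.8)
The plan is to invoke the standard characterization of an internal direct sum: for two subspaces $U$ and $W$ of an ambient vector space, the sum $U+W$ is direct exactly when $U\cap W=\{0\}$. First I would record the ambient setting. By Theorems \ref{th38} and \ref{th39}, $1_V e_1$ and $2_V e_2$ are themselves subspaces of $\C_2^n(\C_1)$, and by Proposition \ref{th40} their sum $1_V e_1 + 2_V e_2$ is again a subspace of $\C_2^n(\C_1)$. This is the space in which the decomposition lives, so the statement $1_V e_1 + 2_V e_2 = 1_V e_1 \oplus 2_V e_2$ is meaningful.

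Next I would verify uniqueness of the decomposition directly. Take an arbitrary $X\in 1_V e_1 + 2_V e_2$ and suppose $X = P_1 + Q_1 = P_2 + Q_2$ with $P_1,P_2\in 1_V e_1$ and $Q_1,Q_2\in 2_V e_2$. Rearranging yields $P_1 - P_2 = Q_2 - Q_1$; since $1_V e_1$ and $2_V e_2$ are subspaces, the left-hand side lies in $1_V e_1$ and the right-hand side lies in $2_V e_2$, so this common vector belongs to $1_V e_1 \cap 2_V e_2$. By Proposition \ref{th43}, $1_V e_1 \cap 2_V e_2 = \{0\}$, hence $P_1 = P_2$ and $Q_1 = Q_2$. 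Thus every element of $1_V e_1 + 2_V e_2$ is written uniquely as a sum of a vector from $1_V e_1$ and a vector from $2_V e_2$, which is precisely the assertion that the sum is direct.

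I do not expect any real obstacle here; the corollary is a formal consequence of the two cited propositions together with the textbook lemma on direct sums. The only subtlety worth flagging explicitly is that $1_V e_1$ and $2_V e_2$ must genuinely be subspaces (so that the difference of two of their elements stays inside them) — and that is exactly what Theorems \ref{th38} and \ref{th39} supply, so the argument closes cleanly.
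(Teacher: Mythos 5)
Your argument is correct and follows exactly the route the paper intends: the paper states this corollary as an immediate consequence of Proposition \ref{th40} (the sum is a subspace) and Proposition \ref{th43} (the intersection is $\{0\}$), and you simply spell out the standard uniqueness-of-decomposition step that the paper leaves implicit. No gaps.
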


\begin{theorem}\label{th45}
Let $1_V$ be a  subspace of $\C_1^n(\C_1)$ and $\dim(1_V)=k$. Then, $\dim(1_V e_1)=k$.
\end{theorem}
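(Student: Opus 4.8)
The plan is to exhibit an explicit linear isomorphism between $1_V$ and $1_V e_1$ as vector spaces over $\C_1$, which forces the two to have the same dimension. Define the map $\varphi \colon 1_V \to 1_V e_1$ by $\varphi(x_1,x_2,\ldots,x_n) = (x_1,x_2,\ldots,x_n)e_1$. By the very definition of $Ve_1$, this map is well-defined and surjective onto $1_V e_1$. First I would check that $\varphi$ is $\C_1$-linear: additivity and homogeneity over $\C_1$ are exactly the two computations already carried out in the proof of Theorem \ref{th38} (there they were used to show $Ve_1$ is closed under the vector-space operations), so $\varphi(X+Y)=\varphi(X)+\varphi(Y)$ and $\varphi(\alpha X)=\alpha\varphi(X)$ for $\alpha\in\C_1$ follow by the same use of Remark \ref{th49}.

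The crux is injectivity of $\varphi$, equivalently that $\ker\varphi=\{0\}$. Suppose $(x_1,\ldots,x_n)\in 1_V$ with $(x_1,\ldots,x_n)e_1 = (0,0,\ldots,0)$. Using Remark \ref{th49} this reads $(x_1 e_1, x_2 e_1,\ldots, x_n e_1) = (0,\ldots,0)$, so $x_i e_1 = 0$ in $\C_2$ for every $i$. Since $x_i\in\C_1$ and $e_1$ is an idempotent that is \emph{not} a zero divisor when multiplied by a complex scalar from $\C_1$ — more precisely, in the idempotent decomposition $x_i = x_i e_1 + x_i e_2$ the component $x_i e_1$ being zero forces $x_i=0$ by uniqueness of the idempotent representation (Definition \ref{th5} / Remark \ref{th49}) — we conclude $x_i = 0$ for all $i$, hence $(x_1,\ldots,x_n)$ is the zero vector of $1_V$. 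Thus $\varphi$ is a $\C_1$-linear bijection.

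Finally, since $\varphi \colon 1_V \to 1_V e_1$ is an isomorphism of vector spaces over $\C_1$, it carries a basis of $1_V$ to a basis of $1_V e_1$; as $\dim(1_V)=k$, we obtain $\dim(1_V e_1)=k$, completing the proof. I expect the only genuinely delicate point to be the injectivity step, i.e. pinning down that $x_i e_1 = 0$ with $x_i\in\C_1$ implies $x_i = 0$; everything else is a direct transcription of the algebra in Theorem \ref{th38} together with the uniqueness of the idempotent representation. One could alternatively bypass the isomorphism language and argue directly that a complex basis $\{v_1,\ldots,v_k\}$ of $1_V$ yields $\{v_1 e_1,\ldots,v_k e_1\}$, which spans $1_V e_1$ by linearity and is linearly independent by the same zero-divisor observation — but the isomorphism packaging is cleaner and reuses the earlier computation verbatim.
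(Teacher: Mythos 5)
Your proposal is correct and is essentially the paper's argument in different packaging: the paper takes a basis $S$ of $1_V$ and verifies directly that $Se_1$ is a basis of $1_Ve_1$, which is exactly what your isomorphism $\varphi(x)=xe_1$ accomplishes, resting on the same key fact that $x_ie_1=0$ with $x_i\in\C_1$ forces $x_i=0$ by uniqueness of the idempotent representation. The alternative you sketch in your last sentence is precisely the route the paper follows.
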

\begin{proof}
Let $1_V$ be a subspace of $\C_1^n(\C_1)$ and $\dim(1_V)=k$.
So we take 
\begin{center}
 $S=\{(\beta_{11},\beta_{12},\ldots,\beta_{1n}), (\beta_{21},\beta_{22},\ldots,\beta_{2n}), \ldots, (\beta_{k1},\beta_{k2},\ldots,\beta_{kn})\}$   
\end{center}
as a basis for  $1_V$. Now, We claim that the set 
\begin{center}
 $S'=\{(\beta_{11},\beta_{12},\ldots,\beta_{1n})e_1, (\beta_{21},\beta_{22},\ldots,\beta_{2n})e_1, \ldots, (\beta_{k1},\beta_{k2},\ldots,\beta_{kn})e_1\}$   
\end{center}
forms a  basis for $1_Ve_1$. Suppose $\sum_{i=1}^{k} \alpha_i \{(\beta_{i1},\beta_{i2},\ldots,\beta_{in})e_1\} = 0, \alpha_i\in \C_1$. Using Remark \ref{th49}, we have
\begin{eqnarray*}
&&\sum_{i=1}^{k} \alpha_i (\beta_{i1}e_1,\beta_{i2}e_1,\ldots,\beta_{in}e_1) = 0\\ 
&\Rightarrow& \ \sum_{i=1}^{k}  (\alpha_i (\beta_{i1}e_1),\alpha_i(\beta_{i2}e_1),\ldots,\alpha_i(\beta_{in}e_1)) = 0\\ 
&\Rightarrow& \sum_{i=1}^{k}  ((\alpha_i \beta_{i1})e_1,(\alpha_i \beta_{i2})e_1,\ldots,(\alpha_i\beta_{in})e_1) =0\\
&\Rightarrow& \sum_{i=1}^{k}  (\alpha_i \beta_{i1},\alpha_i \beta_{i2},\ldots,\alpha_i\beta_{in})e_1 = 0\\
&\Rightarrow& \left\{\sum_{i=1}^{k} (\alpha_i \beta_{i1},\alpha_i \beta_{i2},\ldots,\alpha_i\beta_{in})\right\}e_1 = 0 \\
&\Rightarrow& \sum_{i=1}^{k} (\alpha_i \beta_{i1},\alpha_i \beta_{i2},\ldots,\alpha_i\beta_{in}) = 0\\
&\Rightarrow& \sum_{i=1}^{k} \alpha_i( \beta_{i1}, \beta_{i2},\ldots,\beta_{in}) = 0 
.\end{eqnarray*}
Since $S$ is a basis of $1_V$, implies that $\alpha_i=0 \ \forall i$. Therefore $S'$ is a linearly independent set.\\
Let $X\in 1_Ve_1$. Then, there exists $1_X\in 1_V$ such that $X=1_Xe_1$. Since $1_X\in 1_V$, therefore 
\begin{eqnarray*}
1_X=\sum_{i=1}^{k} \alpha_i (\beta_{i1},\beta_{i2},\ldots,\beta_{in}) 
&\Rightarrow& X= \left\{\sum_{i=1}^{k} \alpha_i (\beta_{i1},\beta_{i2},\ldots,\beta_{in})\right\}e_1
.\end{eqnarray*}
Using Remark \ref{th49}, it follows that
\begin{eqnarray*}
X&=&\left\{\sum_{i=1}^{k} (\alpha_i\beta_{i1},\alpha_i\beta_{i2},\ldots,\alpha_i \beta_{in})\right\}e_1\\
\ &=&\sum_{i=1}^{k} (\alpha_i\beta_{i1},\alpha_i\beta_{i2},\ldots,\alpha_i \beta_{in})e_1\\
&=&\sum_{i=1}^{k} ((\alpha_i\beta_{i1})e_1,(\alpha_i\beta_{i2})e_1,\ldots,(\alpha_i \beta_{in})e_1)\\
&=&\sum_{i=1}^{k} (\alpha_i(\beta_{i1}e_1),\alpha_i(\beta_{i2}e_1),\ldots,\alpha_i (\beta_{in}e_1))\\
&=&\sum_{i=1}^{k} \alpha_i(\beta_{i1}e_1,\beta_{i2}e_1,\ldots,\beta_{in}e_1)\\
&=&\sum_{i=1}^{k} \alpha_i\{(\beta_{i1},\beta_{i2},\ldots,\beta_{in})e_1\}
.\end{eqnarray*}
This implies that
\begin{eqnarray*}
&&X\in \langle S' \rangle\\
&\Rightarrow& 1_V e_1 \subseteq \langle S' \rangle \\
&\Rightarrow&  1_V e_1 = \langle S' \rangle    
.\end{eqnarray*}
Therefore, $S'$ is a basis for $1_V e_1$. This implies that $\dim(1_V e_1)=k$.\\ 
The proof of theorem is completed.
\end{proof}
Dually we may prove the following result.
\begin{theorem}\label{th46}
Let $2_V$ be a subspace of $\C_1^n(\C_1)$  and $\dim(2_V)=k$. Then $\dim(2_V e_2)=k$.        
\end{theorem}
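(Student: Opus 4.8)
The plan is to replicate the argument of Theorem~\ref{th45} with the idempotent $e_1$ replaced throughout by $e_2$. First I would fix a basis
\[
S=\{(\beta_{11},\beta_{12},\ldots,\beta_{1n}),(\beta_{21},\beta_{22},\ldots,\beta_{2n}),\ldots,(\beta_{k1},\beta_{k2},\ldots,\beta_{kn})\}
\]
of $2_V$ and take as the candidate basis for $2_Ve_2$ the set
\[
S'=\{(\beta_{11},\ldots,\beta_{1n})e_2,(\beta_{21},\ldots,\beta_{2n})e_2,\ldots,(\beta_{k1},\ldots,\beta_{kn})e_2\}.
\]
By Theorem~\ref{th39}, $2_Ve_2$ is a subspace of $\C_2^n(\C_1)$, so it is legitimate to speak of $\dim(2_Ve_2)$, and clearly $S'\subseteq 2_Ve_2$. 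It then remains to show that $S'$ is linearly independent over $\C_1$ and that it spans $2_Ve_2$.

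For linear independence, suppose $\sum_{i=1}^{k}\alpha_i\{(\beta_{i1},\ldots,\beta_{in})e_2\}=0$ with $\alpha_i\in\C_1$. Using Remark~\ref{th49} to pull each $\alpha_i$ inside the $n$-tuple componentwise and then to collect the common right factor $e_2$, this becomes $\left\{\sum_{i=1}^{k}(\alpha_i\beta_{i1},\ldots,\alpha_i\beta_{in})\right\}e_2=0$. At this point I would invoke the one fact that actually needs a word of justification: the map $v\mapsto ve_2$ from $\C_1^n$ into $\C_2^n$ annihilates only $v=0$. This follows from the uniqueness of the idempotent representation (Definition~\ref{th5}) together with $e_1e_2=0$ and $e_2^2=e_2$: for $w\in\C_1$ one has $we_2=0\cdot e_1+w\cdot e_2$, so $we_2=0$ forces $w=0$, and the tuple statement follows componentwise. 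Hence $\sum_{i=1}^{k}\alpha_i(\beta_{i1},\ldots,\beta_{in})=0$ in $2_V$, and since $S$ is a basis, $\alpha_i=0$ for all $i$.

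For spanning, let $X\in 2_Ve_2$. Then $X=2_X\,e_2$ for some $2_X\in 2_V$, and writing $2_X=\sum_{i=1}^{k}\alpha_i(\beta_{i1},\ldots,\beta_{in})$ and distributing $e_2$ through the sum by Remark~\ref{th49} gives $X=\sum_{i=1}^{k}\alpha_i\{(\beta_{i1},\ldots,\beta_{in})e_2\}\in\langle S'\rangle$; thus $2_Ve_2=\langle S'\rangle$. Combining the two parts, $S'$ is a basis of $2_Ve_2$, and therefore $\dim(2_Ve_2)=k$. I do not expect any genuine obstacle: the statement is the exact mirror image of Theorem~\ref{th45}, and the only substantive ingredient beyond bookkeeping with Remark~\ref{th49} is the injectivity of multiplication by $e_2$, which the preliminaries already make available.
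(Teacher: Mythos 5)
Your proposal is correct and follows exactly the route the paper intends: the paper proves Theorem~\ref{th45} by transporting a basis of $1_V$ to $1_Ve_1$ via multiplication by $e_1$ and then disposes of Theorem~\ref{th46} with the single remark ``Dually we may prove the following result,'' which is precisely your $e_1\mapsto e_2$ replacement. Your explicit justification that $we_2=0$ forces $w=0$ (via uniqueness of the idempotent representation) is a small welcome addition that the paper's own argument leaves implicit.
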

The next result follows from the Corollary 2.46, Theorem 2.47 and Theorem 2.48.

\begin{theorem}\label{th47}
Let $1_V$ and $2_V$ be two sub-spaces of $\C_1^n(\C_1
)$, $\dim(1_V)=k_1$ and  $\dim(2_V)=k_2$. Then $\dim(1_V e_1 + 2_V e_2) = k_1 + k_2$.     
\end{theorem}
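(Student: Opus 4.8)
The plan is to deduce the dimension formula from the three results already established immediately above: the directness of the sum (Corollary \ref{th44}) and the dimension count for each summand (Theorems \ref{th45} and \ref{th46}). First I would record that, since $1_V$ and $2_V$ are subspaces of $\C_1^n(\C_1)$, Theorems \ref{th38} and \ref{th39} guarantee that $1_V e_1$ and $2_V e_2$ are genuinely subspaces of the $\C_1$-vector space $\C_2^n(\C_1)$, so that talking about $\dim(1_V e_1 + 2_V e_2)$ makes sense and the usual linear-algebra machinery applies.

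Next I would invoke Corollary \ref{th44} to write $1_V e_1 + 2_V e_2 = 1_V e_1 \oplus 2_V e_2$, and then apply the standard fact that the dimension of an (internal) direct sum of two finite-dimensional subspaces is the sum of their dimensions; equivalently, this is the rank–nullity / inclusion–exclusion identity $\dim(U+W)=\dim U+\dim W-\dim(U\cap W)$ specialized via Proposition \ref{th43} to $\dim(1_V e_1 \cap 2_V e_2)=\dim\{0\}=0$. Combining this with $\dim(1_V e_1)=k_1$ from Theorem \ref{th45} and $\dim(2_V e_2)=k_2$ from Theorem \ref{th46} yields $\dim(1_V e_1 + 2_V e_2)=k_1+k_2$.

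If one prefers an explicit and self-contained argument, I would instead fix a basis $S_1=\{u_1,\ldots,u_{k_1}\}$ of $1_V$ and a basis $S_2=\{v_1,\ldots,v_{k_2}\}$ of $2_V$, and show that $S_1 e_1 \cup S_2 e_2 = \{u_1 e_1,\ldots,u_{k_1} e_1, v_1 e_2,\ldots,v_{k_2} e_2\}$ is a basis of $1_V e_1 + 2_V e_2$. Spanning is immediate from the definition of the sum together with the idempotent scaling identities in Remark \ref{th49} (exactly as in the proof of Theorem \ref{th45}). For linear independence, from a relation $\sum_i \alpha_i (u_i e_1) + \sum_j \beta_j (v_j e_2) = 0$ with $\alpha_i,\beta_j\in\C_1$, the left term lies in $1_V e_1$ and the right term in $2_V e_2$, so by Proposition \ref{th43} both must vanish; then the linear independence of $S_1 e_1$ and of $S_2 e_2$ (Theorems \ref{th45}, \ref{th46}) forces all $\alpha_i=\beta_j=0$. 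Counting the basis elements gives $k_1+k_2$.

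There is no real obstacle here: the statement is a packaging of the preceding lemmas. The only point requiring a touch of care is making sure the "direct sum $\Rightarrow$ additive dimension" step is justified in the $\C_1$-vector-space setting rather than over $\C_2$ (which is not a field); the explicit-basis route above handles this cleanly and is the version I would write out in full.
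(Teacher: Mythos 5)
Your proposal is correct and follows the same route the paper takes: the paper derives Theorem \ref{th47} precisely by combining the direct-sum decomposition of Corollary \ref{th44} with the dimension counts of Theorems \ref{th45} and \ref{th46}. Your additional explicit-basis argument is a sound (and more detailed) elaboration of the same idea, not a different approach.
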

The next result follows from the Theorem \ref{th47}.
\begin{proposition}\label{th48}
If $A=1_A e_1 + 2_A e_2 \in \C_2^{n\times m}$, then \\
$\dim($idempotent column space of $A)$= $\dim($idempotent row space of $A)$, i.e. $\rho_{ir}(A)$ = $\rho_{ic}(A)$
.\end{proposition}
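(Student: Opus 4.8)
The plan is to reduce the statement to Theorem~\ref{th47} together with the classical equality of row rank and column rank for a matrix over the field $\C_1$.

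First I would unwind the two definitions. By Definition~\ref{th26}, the idempotent row space of $A$ is $(\text{row space of }1_A)\,e_1 + (\text{row space of }2_A)\,e_2$; the row space of $1_A$ is a subspace of $\C_1^m(\C_1)$ whose dimension is the ordinary row rank of the complex matrix $1_A$, and similarly for $2_A$. Applying Theorem~\ref{th47} (with ambient space $\C_1^m$) to $1_V = \text{row space of }1_A$ and $2_V = \text{row space of }2_A$ gives
\[
\rho_{ir}(A) = \dim\big((\text{row space of }1_A)\,e_1 + (\text{row space of }2_A)\,e_2\big) = \rho_r(1_A) + \rho_r(2_A).
\]
In exactly the same way, using Definition~\ref{th27} and Theorem~\ref{th47} (now with ambient space $\C_1^n$) for the column spaces, I would obtain
\[
\rho_{ic}(A) = \rho_c(1_A) + \rho_c(2_A).
\]

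Next I would invoke the standard fact that for any complex matrix $M$ the row space and the column space have the same dimension, i.e. $\rho_r(M) = \rho_c(M)$; since $\C_1$ is a field this is the usual rank theorem and may simply be cited (or re-derived by row reduction if a self-contained account is wanted). Taking $M = 1_A$ and $M = 2_A$ yields $\rho_r(1_A) = \rho_c(1_A)$ and $\rho_r(2_A) = \rho_c(2_A)$, and combining with the two displays above,
\[
\rho_{ir}(A) = \rho_r(1_A) + \rho_r(2_A) = \rho_c(1_A) + \rho_c(2_A) = \rho_{ic}(A),
\]
which is the claim.

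There is essentially no obstacle here beyond bookkeeping: the one point to watch is that Theorem~\ref{th47} is phrased for subspaces of a single space $\C_1^n(\C_1)$, so it must be applied once inside $\C_1^m$ (for the rows) and once inside $\C_1^n$ (for the columns); the conclusion $\dim(1_V e_1 + 2_V e_2) = \dim 1_V + \dim 2_V$ does not depend on which ambient coordinate space is used, so this causes no difficulty.
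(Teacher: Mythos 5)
Your proposal is correct and follows exactly the route the paper intends: the paper gives no written proof beyond the remark that the result ``follows from Theorem~\ref{th47}'', and your argument fills in precisely the intended details --- applying Theorem~\ref{th47} to get $\rho_{ir}(A)=\rho_r(1_A)+\rho_r(2_A)$ and $\rho_{ic}(A)=\rho_c(1_A)+\rho_c(2_A)$, then invoking the classical equality of row rank and column rank for the complex matrices $1_A$ and $2_A$. Your care about the two different ambient spaces $\C_1^m$ and $\C_1^n$ is a welcome clarification, not a deviation.
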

In the view of the results 2.36 of section 2. The following problem still remain open for the research.\\
{\bf Problem 1.}
If $A\in \C_2^{n\times n}$ is a non-singular matrix such that $\xi_{ij}\notin O_2$ for all $i, j$, then for $n\le 3$ we have shown that $\rho(A) = n$. Is this true for all n?

\bibliographystyle{amsplain}
\bibliography{references}

\providecommand{\bysame}{\leavevmode\hbox to3em{\hrulefill}\thinspace}
\providecommand{\MR}{\relax\ifhmode\unskip\space\fi MR }
\providecommand{\MRhref}[2]{%
  \href{http://www.ams.org/mathscinet-getitem?mr=#1}{#2}
}
\providecommand{\href}[2]{#2}
\begin{thebibliography}{10}

\bibitem{alpay2014basics}
D.~Alpay, M.~E. Luna-Elizarrar{\'a}s, M.~Shapiro, and D.~C Struppa, \emph{Basics of functional analysis with bicomplex scalars, and bicomplex schur analysis}, Springer Science \& Business Media, 2014.

\bibitem{anjali2023matrix}
Anjali, Fahed Zulfeqarr, Akhil Prakash, and Prabhat Kumar, \emph{Matrix representations of linear transformations on bicomplex space}, Asian-European Journal of Mathematics \textbf{17} (2024), no.~11, 2450079.

\bibitem{futagawa1928}
M.~Futagawa, \emph{On the theory of functions of a quaternary variable}, Tohoku Mathematical Journal, First Series \textbf{29} (1928), 175--222.

\bibitem{futagawa1932}
\bysame, \emph{On the theory of functions of a quaternary variable (part ii)}, Tohoku Mathematical Journal, First Series \textbf{35} (1932), 69--120.

\bibitem{gervais2011finite}
R.~Gervais~Lavoie, L.~Marchildon, and D.~Rochon, \emph{Finite-dimensional bicomplex hilbert spaces}, Advances in applied Clifford algebras \textbf{21} (2011), no.~3, 561--581.

\bibitem{luna2015bicomplex}
M~E. Luna-Elizarrar{\'a}s, M.~Shapiro, D.~C Struppa, and A.~Vajiac, \emph{Bicomplex holomorphic functions: the algebra, geometry and analysis of bicomplex numbers}, Birkh{\"a}user, 2015.

\bibitem{price2018introduction}
G.~B. Price, \emph{An introduction to multicomplex spaces and functions}, CRC Press, 2018.

\bibitem{riley1953}
J.~D. Riley, \emph{Contributions to the theory of functions of a bicomplex variable}, Tohoku Mathematical Journal, Second Series \textbf{5} (1953), no.~2, 132--165.

\bibitem{rochon2004}
Dominic Rochon and Michael Shapiro, \emph{On algebraic properties of bicomplex and hyperbolic numbers}, Anal. Univ. Oradea, fasc. math \textbf{11} (2004), no.~71, 110.

\bibitem{segre1892}
C.~Segre, \emph{Le rappresentazioni reali delle forme complesse e gli enti iperalgebrici}, Mathematische Annalen \textbf{40} (1892), no.~3, 413--467.

\bibitem{srivastava2008}
R.~K. Srivastava, \emph{Certain topological aspects of bicomplex space}, Bull. Pure \& Appl. Math \textbf{2} (2008), no.~2, 222--234.

\end{thebibliography}
\end{document}